\documentclass[a4paper,11pt,reqno]{amsart}
\usepackage[left=3cm,right=3cm,top=3.5cm,bottom=3.5cm]{geometry}
\usepackage{amsmath,amssymb,amsthm,amscd,amsfonts}
\usepackage{latexsym}
\usepackage{graphicx}
\usepackage{booktabs} 
\usepackage[all]{xy}
\usepackage{xcolor}
\usepackage[OT2,T1]{fontenc}
\usepackage{hyperref}
\usepackage{mathtools}
\usepackage{tikz}
\usepackage{tikz-cd}
\usetikzlibrary{arrows}
\usepackage{float}
\usepackage{caption}
\usepackage[new]{old-arrows}
\usepackage{multirow}
\usepackage{array}
\allowdisplaybreaks

\DeclareMathOperator{\Pic}{Pic}

\DeclareMathOperator{\Hom}{Hom}
\DeclareMathOperator{\Gal}{Gal}
\DeclareMathOperator{\ev}{ev}

\theoremstyle{plain}
\newtheorem{theorem}{Theorem}[section]
\newtheorem{lemma}[theorem]{Lemma}
\newtheorem{proposition}[theorem]{Proposition}
\newtheorem{corollary}[theorem]{Corollary}

\numberwithin{equation}{section} \numberwithin{figure}{section}

\theoremstyle{definition}
\newtheorem{definition}[theorem]{Definition}

\newtheorem*{notation}{Notation}

\theoremstyle{remark}
\newtheorem{remark}[theorem]{Remark}
\numberwithin{table}{section}

\newcommand{\Z}{{\mathbb Z}}

\newcommand{\Q}{{\mathbb Q}}	
\newcommand{\R}{{\mathbb R}}
\newcommand{\F}{{\mathbb F}}
\newcommand\GG{\mathbb{G}}

\renewcommand{\P}{{\mathbb P}}	
\renewcommand{\H}{\mathrm{H}}
\newcommand{\Br}{{\mathrm{Br}\,}}
\newcommand{\Brzero}{{\mathrm{Br}_0\,}}
\newcommand{\Brone}{{\mathrm{Br}_1\,}}
\renewcommand{\bar}{\overline}

\newcommand{\inv}{\operatorname{inv}}
\newcommand{\A}{{\mathbb A}}
\newcommand{\legendre}[2]{\left(\frac{#1}{#2}\right)}
\renewcommand{\U}{{\mathcal{U}}}
\newcommand{\modn}{{\rm{mod} \hspace{0.1cm} }}

\renewcommand{\leq}{\leqslant}
\renewcommand{\geq}{\geqslant}

\title{Integral Brauer--Manin Obstruction for Fermat Near Misses}
\author[J. Alessandrì]{Jessica Alessandrì}
\address{Jessica Alessandrì, Max Planck Institute for Mathematics\\
Vivatsgasse 7\\
53111 Bonn\\
Germany}
\email{alessandri@mpim-bonn.mpg.de}
\urladdr{https://sites.google.com/view/jessicaalessandri}
\author[J. He]{Jiazhi He}
\address{Jiazhi He, Department of Mathematics, University of Bath, Bath, BA2 7AL, UK.}
\email{jh3671@bath.ac.uk, hjz19980623@gmail.com}

\subjclass[2020]{Primary: 14G12; Secondary: 14G05, 11D41, 14F22}

\date{\today}

\begin{document}

\begin{abstract}
    We study the integral Hasse principle for the affine surfaces given by $x^4+y^4-w^2=m^2$, where $m>1$ is odd and squarefree.
    We give a criterion to determine if there is an integral Brauer--Manin obstruction, which depends only on simple congruence conditions on the prime factors of $m$ modulo $8$. We use this to show that for $100\%$ of such surfaces there is no integral Brauer--Manin obstruction to the Hasse principle for integral points.
\end{abstract}
\maketitle

\tableofcontents

\section{Introduction}

The question of which integers can be represented as a sum of three cubes is a long-standing problem in number theory, which has been considered by many authors: \cite{HB92,Colliot2012Wittenberg,Boo19,BS21} to name a few. The more general case of integers represented by diagonal ternary cubic forms is considered in the recent paper \cite{Lyczak2023}, where it is shown that local representations are not enough to ensure integral representation: they show that the \emph{Hasse principle} do not hold, by studying the \emph{Brauer--Manin obstruction} on the affine surface defined by the given equation.

In this paper, we address the same question, for weighted diagonal quartics, i.e.\  equations
\begin{equation} \label{eq:fermat_near_miss}
  x^4 + y^4 - w^2 = n, \quad n \in \Z.
\end{equation}
As in the case of the sum of three cubes, one can see these equations as ``Fermat near misses'', because of the equation $x^4 + y^4 = w^2$, which helped to prove the quartic case of Fermat's Last Theorem.
These equations were recently studied in \cite{alessandriloughran}, where it was shown that, for many of these equations, once a ``good''  integer solution (in terms of some sign conditions on the coordinates) exists, then there are infinitely many and, more than that, there is is a \emph{not--thin} set of them.
However, the problem of determining if there is a solution at all is still open.

We focus on the case when \(n = m^2\) is a square number. Already in the case when \(n = 9 \) one can show that equation \eqref{eq:fermat_near_miss} has solutions everywhere locally, but we could not find integral solutions up to height $10^4$.

The main result of the paper is that the failure of the Hasse principle, explained by a Brauer--Manin obstruction, only depends on the primes dividing \(m\) and congruent to \(3\) or \(5\) modulo \(8\).
To be more precise, for $j\in\{1,3,5,7\}$ let
\[
  \omega_j(m)=\#\{q\mid m : q\ \text{prime},\ q\equiv j \pmod 8\}.
\]
Our main theorem is the following.
\begin{theorem}\label{thm:DP-BMO}
Let $m>1$ be odd and squarefree. Then there is an integral Brauer--Manin obstruction to the Hasse principle for \(x^4 + y^4 - w^2 = m^2\) if and only if \(\omega_5(m)=0\) and \(\omega_3(m)\) is odd.
\end{theorem}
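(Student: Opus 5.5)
We will follow the usual strategy for integral Brauer--Manin obstructions on affine log K3 surfaces: compute the relevant part of $\Br U$ for $U\colon x^4+y^4-w^2=m^2$, evaluate its elements on local integral points $U(\Z_p)$, and then decide when the resulting sum of invariants is forced to be nonzero. The shape of the criterion (dependence only on primes $q\mid m$ with $q\equiv 3,5 \pmod 8$) suggests that the relevant elements are quaternion algebras built from the factorisation
\[
x^4+y^4-m^2=w^2\quad\Longleftrightarrow\quad (x^2+ \text{stuff})\cdots,\qquad\text{e.g. } (w-m)(w+m)=x^4+y^4,
\]
so that natural candidates are $\mathcal A=(w+m,\,2)$ or $(w+m,\,-2)$, together with their twists. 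Indeed $x^4+y^4$ is a norm from $\Q(\zeta_8)$ up to factors, and the symbols $(\cdot,2)$, $(\cdot,-1)$, $(\cdot,-2)$ detect exactly the classes of primes mod $8$. The first step is therefore to determine $\Br_1 U/\Br_0 U$ (via $\H^1(\Q,\Pic \bar U)$ and the units $\bar\Q[U]^\times/\bar\Q^\times$, computed from the compactification in weighted projective space and the boundary divisor $x^4+y^4=w^2$), and to exhibit explicit quaternion representatives such as $(w+m,2)$, $(w+m,-1)$, $(w+m,-2)$; we expect the transcendental part to play no role, or to be dealt with as in \cite{alessandriloughran}.

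The second step is local evaluation. At the real place and at primes $p\nmid 2m$ the algebras should evaluate trivially on $U(\Z_p)$ (good reduction plus the fact that $w+m$ and $w-m$ have product a sum of two fourth powers, so for $p\equiv 3\pmod 4$ valuations are even, etc.). At $p=q\mid m$, since $(w+m)(w-m)\equiv x^4+y^4$ and $w+m\equiv w-m \pmod q$, one checks that when $q\mid w$ we get $x^4\equiv -y^4$, forcing conditions on $q \bmod 8$; otherwise $w+m$ is a unit whose class is a square times something. Concretely we expect: for $q\equiv 1\pmod 8$ all symbols are trivial; for $q\equiv 7$ the invariant is constant; for $q\equiv 3$ the invariant of $(w+m,2)$ (or the appropriate twist) is the constant $1/2$; and for $q\equiv 5$ the invariant takes both values $0,1/2$ on $U(\Z_q)$. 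The $2$-adic place must be computed by hand: using that $m$ is odd and $x^4+y^4-w^2\equiv 1 \pmod 8$, we determine the values of $w+m$ modulo squares in $\Q_2^\times$ and get a constant invariant for the relevant algebra, whose contribution combines with that of the primes $\equiv 7$ via quadratic reciprocity-type identities (product of $m$'s prime factors).

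Finally we assemble: if $\omega_5(m)>0$, the varying invariant at some $q\equiv5$ kills any obstruction from the algebra, and we must also check that no other element of $\Br U/\Br_0$ gives an obstruction (each must vary at some place, or pairs combine); if $\omega_5(m)=0$, all invariants are constant and their sum equals $\omega_3(m)/2 \bmod 1$ after the $2$-adic and $7$-adic contributions cancel, giving an obstruction iff $\omega_3(m)$ is odd. The main obstacle is the precise $2$-adic evaluation together with ensuring we have the full algebraic Brauer group (so that the ``if'' direction really has no further obstruction); I would first compute $\H^1(\Q,\Pic\bar U)$ via the Galois action on the lines/boundary over $\Q(\zeta_8)$, then do the $2$-adic evaluation by a finite enumeration of $x,y,w \bmod 2^k$ with Hensel lifting.
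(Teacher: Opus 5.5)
There is a genuine gap, and it begins with the identity your whole construction rests on. On $U_{m^2}$ we have $w^2=x^4+y^4-m^2$, so $(w-m)(w+m)=x^4+y^4-2m^2$, not $x^4+y^4$. The correct rearrangement is $w^2+m^2=x^4+y^4$.

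As a result, your candidates $(w+m,c)$ with $c\in\{2,-1,-2\}$ do not lie in $\Br U_{m^2}$ at all. Consider the curve $C=\{w=-m\}\cap U_{m^2}\cong\{x^4+y^4=2m^2\}$. Along $C$ the function $w+m=(x^4+y^4-2m^2)/(w-m)$ has valuation $1$, since $w-m=-2m$ is a unit there. So the residue of $(w+m,c)$ along $C$ is the class of $c$ in $\Q(C)^\times/\Q(C)^{\times 2}$. This is nontrivial because $C$ is geometrically integral and $c\notin\Q^{\times2}$. The local evaluations you sketch are therefore for classes that do not exist, and the predicted pattern (e.g.\ a $2$-adic contribution cancelling against primes $q\equiv 7\pmod 8$) has no basis.

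The factorisation that actually carries the arithmetic is $x^4-m^2=w^2-y^4$, i.e.\ $sr=tu$ with $s=x^2-m$, $r=x^2+m$, $t=w-y^2$, $u=w+y^2$. It exists precisely because $n=m^2$ is a square. It gives a conic bundle $X_{m^2}\to\P^1$ with five degenerate fibres (Lemma~\ref{lem:singular-fibres}). The standard conic-bundle computation then yields $\Br X_{m^2}/\Br\Q\cong(\Z/2\Z)^2$, generated by $\alpha_1=(s/(s-t),m)$ and $\alpha_2=(t/(s+t),-m)$. The dependence on $q \bmod 8$ enters through the Legendre symbol $\legendre{2}{q}$ produced when evaluating at $q\mid m$ (e.g.\ $A_1A_2\equiv x^2/(2y^2)\pmod q$ when $q\nmid xy$), not through constant entries $2,-2$.

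Second, you cannot set the transcendental part aside. The direction ``no obstruction when $\omega_5(m)>0$ or $\omega_3(m)$ is even'' needs the \emph{whole} of $\Br U_{m^2}$. The paper shows $\Br U_{m^2}=\Br X_{m^2}$ in two steps.
\begin{enumerate}
\item $\Br X_{m^2}\cong\Brone U_{m^2}$, via the Galois cohomology of $\Pic$ over $L=\Q(\zeta_8,\sqrt m)$.
\item $\Br U_{m^2}/\Brone U_{m^2}=0$. By Proposition~\ref{prop:torsion_isogenies} and the isogeny class of the boundary curve $D\colon y^2=x^3-4x$, only $2$-torsion can occur. A purity argument over $K=\Q(\sqrt m)$ then rules it out: restricting to the lines $x=\sqrt m\,z$, $w=\pm y^2$ forces a putative double cover of $D$ to be an isogeny with a $K$-rational point of order $4$ lying over $P_2=(0,0)$, and no such point exists.
\end{enumerate}

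Finally, even with the correct generators your assembly is too coarse. When $\omega_5(m)=0$, only $\beta=\alpha_1+\alpha_2$ has constant local invariants: $-1$ exactly at $q\equiv 3\pmod 8$, and $+1$ at $2$ when $m\equiv 3,5\pmod 8$. The pair $(\inv_q\alpha_1,\inv_q\alpha_2)$ takes both values $(1,-1)$ and $(-1,1)$ at such $q$. So proving the Brauer--Manin set is nonempty when $\omega_3(m)$ is even requires choosing local points for both generators simultaneously, using that there is an even number of primes $q\equiv 3\pmod 8$.
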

An immediate consequence is that if \(m\) is an odd prime, then there is an integral Brauer--Manin obstruction if and only if \(m \equiv 3 \pmod 8\). This explains why we could not find integer solutions for \(n=9\), although it is everywhere locally soluble.

Theorem~\ref{thm:DP-BMO} also determines how often the obstruction occurs. Let $N_{\Br}(X)$ be the number of odd squarefree $m\leq X$ for which there is an integral Brauer--Manin obstruction to the Hasse principle for our equation. We show in Section~\ref{sec:counting} that
\[
  N_{\Br}(X)\sim c\,\frac{X}{(\log X)^{1/4}}
\]
for an explicit constant $c>0$ (see Corollary~\ref{cor:counting}). Since the number of odd squarefree $m\leq X$ is asymptotic to $\tfrac{4}{\pi^2}X$, we conclude that for $100\%$ of the odd squarefree $m$ there is no integral Brauer--Manin obstruction.

The proof of Theorem~\ref{thm:DP-BMO} relies on the computation of the Brauer group \(\Br U_{m^2}\) of the affine surface \(U_{m^2}\) defined by \(x^4 + y^4 - w^2 = m^2\).
We relate the Brauer group of \(U_{m^2}\) to the Brauer group of its natural compactification \(X_{m^2}\), which is a del Pezzo surface of degree $2$ in the weighted projective space $\P(1,1,1,2)$.
\begin{theorem}\label{thm:DP-affine-BG}
    For any odd squarefree integer \(m>1\) we have that the Brauer group of \(U_{m^2}\) and the Brauer group of \(X_{m^2}\) are isomorphic modulo constants, and moreover
    \[
      \Br U_{m^2}/\Br\Q=\Br X_{m^2}/\Br \Q \simeq(\Z/2\Z)^2.
    \]
\end{theorem}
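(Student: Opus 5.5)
We plan to prove the statement in three stages: compute $\Br X_{m^2}/\Br\Q$ via Galois cohomology of the geometric Picard group, check that $\bar U_{m^2}$ carries no transcendental classes and no extra algebraic classes beyond those of $X_{m^2}$, and show that the natural restriction map $\Br X_{m^2}\to \Br U_{m^2}$ is an isomorphism modulo constants.

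\emph{Step 1: the compactification.} Let $X=X_{m^2}\subset \P(1,1,1,2)$ be given by $x^4+y^4-w^2=m^2z^4$. This is a del Pezzo surface of degree $2$, and the boundary $D=X\setminus U_{m^2}$ is the curve $z=0$, i.e.\ $w^2=x^4+y^4$. Over $\bar\Q$ this curve is a smooth genus one curve, a double cover of $\P^1$ branched at the roots of $x^4+y^4$. In particular $D$ is geometrically integral and is the anticanonical class $-K_X$. Since $X$ is geometrically rational, $\Br \bar X=0$ and $\Br X/\Br\Q\simeq \H^1(\Q,\Pic\bar X)$. Because $m^2$ is a square, the $56$ exceptional curves are defined over $K=\Q(\zeta_8,\sqrt[4]{?})$-type fields independent of $m$ up to the square factor. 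Indeed, the equation $x^4+y^4-w^2=m^2z^4$ is twisted from $x^4+y^4-w^2=z^4$ by $z\mapsto \sqrt m z$, which only changes $z^4$ by $m^2$. That is, $X_{m^2}\simeq X_1$ over $\Q$ via $z\mapsto z$ after rescaling? Not quite: the substitution $z\mapsto \sqrt{m}z$ is only defined over $\Q(\sqrt m)$. Still, the Galois action on lines is determined by an explicit splitting field built from $\zeta_8$, $\sqrt[4]{2}$-type elements and $\sqrt m$. I would write down the $56$ lines, namely those coming from bitangent lines of the quartic $x^4+y^4-m^2z^4$ (the $28$ bitangents, each splitting into two), determine the Galois group $G\subset W(E_7)$, and compute $\H^1(G,\Pic\bar X)$ with Magma. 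I expect the answer $(\Z/2)^2$ for all odd squarefree $m>1$. The dependence on $m$ enters only through which of $\sqrt{\pm m},\sqrt{\pm 2m}$ lie in the base fields, and for $m>1$ these are generic. I would also exhibit two explicit quaternion algebras, for instance of the shape $(w+m z^2, \,\cdot\,)$ or $(x^2+\sqrt{\pm1}\,\ldots)$ descended via corestriction, that generate the group.

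\emph{Step 2: comparing $U$ and $X$.} Use the Gysin/purity sequence
\[
0\to \Br X\to \Br U\xrightarrow{\ \mathrm{res}\ }\H^1(D,\Q/\Z)\to \cdots,
\]
together with its geometric analogue. Geometrically, $\bar\Q[U]^\times=\bar\Q^\times$ and $\Pic \bar U=\Pic\bar X/\Z[D]=\Pic\bar X/\Z K_X$. Moreover $\Br \bar U$ injects into $\H^1(\bar D,\Q/\Z)$, the divisible group from the torsion of the elliptic curve $\bar D$, and the kernel of the corresponding map is controlled by the fact that $D$ is not divisible in $\Pic \bar X$. So I would use the Hochschild--Serre filtration:
\begin{itemize}
\item the algebraic part $\H^1(\Q,\Pic\bar U)$;
\item the transcendental part $(\Br\bar U)^{G}$.
\end{itemize}
First, compare $\H^1(\Q,\Pic\bar X/\Z K_X)$ with $\H^1(\Q,\Pic\bar X)$ via the long exact sequence of $0\to\Z\to\Pic\bar X\to\Pic\bar U\to 0$. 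Since $\H^1(\Q,\Z)=0$, the only possible extra contribution is $\ker(\H^2(\Q,\Z)\to\H^2(\Q,\Pic\bar X))$. This is zero because $K_X$ pairs to $2$ with itself and one can find a Galois-stable class $L$ with $L\cdot K_X=\pm1$ (e.g.\ a sum of a Galois orbit of lines of odd total degree, or a conic class). That class gives a splitting of the inclusion.

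\emph{Step 3: no transcendental classes.} Show that $(\Br\bar U)^{G}$ contributes nothing modulo what Step 1 gives. Here $\Br\bar U\cong \H^1(\bar D,\Q/\Z)$ modulo the image of $\Pic\bar X\to\Q/\Z$ via intersection with $D$, i.e.\ essentially $\mathrm{Hom}(E[\infty]\text{ dual})$ for $E$ the Jacobian of $w^2=x^4+y^4$. This curve has CM by $\Z[i]$ and conductor $32$-type. So I would show $E(\Q^{ab}\text{-relevant})[n]^{\vee}$ has no nonzero Galois invariants, using that the Galois action on $E[\ell]$ is large for odd $\ell$ and a direct check at $\ell=2$. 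I expect this to be the main obstacle: the $2$-primary part, where the $4$-torsion of $E:y^2=x^3-4x$-type curves is partly rational, and where invariants in $\H^1(\bar D,\Q/\Z)$ could survive. I would handle it by computing the $2$-power torsion field explicitly and using that invariant classes must also be unramified along $D$ in a compatible way with the twist by $m$ (the boundary $D$ does not depend on $m$, but the residue compatibility does). This should kill the candidates when $m>1$ is odd squarefree. Combining Steps 1–3 gives $\Br U_{m^2}/\Br\Q=\Br X_{m^2}/\Br\Q\simeq(\Z/2\Z)^2$.
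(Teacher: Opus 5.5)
\textbf{Your Step 3 is the heart of the theorem, and as planned it cannot work.} Since $\bar X_{m^2}$ is rational, $\H^3(\bar X_{m^2},\mu_n)=0$, and the image of $\H^2(\bar X_{m^2},\mu_n)$ in $\H^2(\bar U_{m^2},\mu_n)$ is exactly $\Pic\bar U_{m^2}/n$. The Gysin sequence therefore gives $\Br\bar U_{m^2}\simeq\H^1(\bar D,\Q/\Z)$ on the nose, whose $2$-torsion is $\Hom(J(D)[2],\Z/2\Z)$.

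All the $2$-torsion of $J(D)\colon y^2=x^3-4x$ is rational, so $(\Br\bar U_{m^2})^{\Gal(\bar\Q/\Q)}$ contains $(\Z/2\Z)^2$. The invariants do not vanish, so your ``direct check at $\ell=2$'' must fail. What has to be shown is that these invariant classes do not lift to $\Br U_{m^2}$. This is a statement about the surface $U_{m^2}$ over $\Q$, not about the Galois module $\H^1(\bar D,\Q/\Z)$, which does not depend on $m$. Your phrases ``computing the $2$-power torsion field'' and ``residue compatibility with the twist by $m$'' do not supply a mechanism for it.

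The paper's mechanism has two steps. First, Proposition~\ref{prop:torsion_isogenies} turns a class of order $n$ in $\Br U_{m^2}/\Brone U_{m^2}$ into a rational cyclic $n$-isogeny out of $J(D)$ onto a curve with a rational point of order $n$. Since $D$ has no rational cyclic isogeny of degree $>2$, only classes of order $2$ remain. This also disposes of odd $\ell$; note that ``large Galois image'' is not the right reason there, since $J(D)$ has CM by $\Z[i]$.

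Second, for order $2$ one base changes to $K=\Q(\sqrt m)$. There the lines $x=\sqrt m z$, $w=\pm y^2$ meet $D$ transversally at $P_1=[0:1:0:1]$ and $P_2=[0:1:0:-1]$, and their complements are $\simeq\A^1_K$. Hence $\partial_D(\mathcal A_K)$ is trivial at $P_1$ and $P_2$. The associated double cover is then a $2$-isogeny $E\to D$ over $K$ with a $K$-point of order $4$ above $P_2$, which Lemma~\ref{lem:auxiliary} and the explicit isogeny $E_2\to D$ rule out.

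\textbf{Your Step 2 also rests on a false claim, although its conclusion survives.} No Galois-invariant $L\in\Pic\bar X_{m^2}$ has $L\cdot K_X=\pm1$. By Lemma~\ref{lem:singular-fibres}, the two components $E_j,E_j'$ of each of the six singular fibres of $\pi$ are exchanged by Galois (over $\Q(\sqrt{\pm m})$, resp.\ $\Q(\zeta_8)$). So an invariant $L$ satisfies $L\cdot E_j=L\cdot E_j'=\tfrac12\,L\cdot F$. Contracting the $E_j$ to a Hirzebruch surface $\mathbb F_e$ gives $-K_X=2\sigma+(e+2)F-\sum_{j=1}^6E_j$, hence $L\cdot(-K_X)=2\,L\cdot\sigma+(e-1)\,L\cdot F$, which is even. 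Conic classes have degree $2$. Since the action factors through $(\Z/2\Z)^3$, an odd-size orbit of lines would be a single rational line, which the parity count excludes. So $0\to\Z\to\Pic\bar X\to\Pic\bar U\to0$ is not equivariantly split.

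The injectivity of $\H^2(\Q,\Z)\to\H^2(\Q,\Pic\bar X)$ that you need is nevertheless true, for a different reason. Let $\Gamma=\Gal(\bar\Q/\Q)$ and let $E$ be a line with stabiliser $\Gamma_E$. Then $M\mapsto\sum_{g\Gamma_E}(M\cdot gE)\,[g\Gamma_E]$ is an equivariant map $\Pic\bar X\to\Z[\Gamma/\Gamma_E]$ sending $-K_X$ to the norm element. By Shapiro's lemma the induced map $\H^2(\Gamma,\Z)\to\H^2(\Gamma_E,\Z)$ is restriction of characters. Take a component of the fibre over $[0:1]$ and one over $[1:0]$; their stabilisers are $\Gal(\bar\Q/\Q(\sqrt m))$ and $\Gal(\bar\Q/\Q(\sqrt{-m}))$, which together generate $\Gamma$. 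So no nonzero character survives both restrictions. This gives a conceptual replacement for the \texttt{Magma} search over subgroups of $W(E_7)$ used in Proposition~\ref{prop:algebraic_brauer_group}.
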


We also compute the two explicit quaternion algebras $\alpha_1,\alpha_2$ (see Corollary~\ref{cor:generators}) generating $\Br U_{m^2}/\Br\Q$;
thus, to compute the Brauer set, we are reduced to computing the local invariants of $\alpha_1$ and $\alpha_2$.
By Lemma~\ref{lem:easy}, only the place $2$ and the primes dividing $m$ contribute, and we compute these in Section~\ref{sec:local}.

To show that the two Brauer groups are isomorphic, we adopt a strategy similar to that of \cite{Lyczak2023}, which generalizes \cite{uppal2024integral, Colliot2012Wittenberg} for affine diagonal cubic surfaces.
A key feature of our surface is that it admits a conic bundle structure, which makes the computations of the Brauer group and of the lines easier.
To compute the algebraic part of the Brauer group, we use \texttt{Magma} \cite{Magma}, running through all possible Galois actions compatible with this conic bundle structure.
One of the most delicate step in the proof of Theorem~\ref{thm:DP-affine-BG} is the computation of the transcendental Brauer group: it requires a delicate study of the isogeny class of the elliptic curve at infinity, namely the boundary divisor of the compactification $U_{m^2} \subset X_{m^2}$.
We then use the explicit form of the generators of the Brauer group to compute the local invariants, which leads to the proof of Theorem~\ref{thm:DP-BMO}.
Finally, we apply this result to count the number of $m$ for which there is a Brauer--Manin obstruction. For this last step, we follow an argument similar to that of \cite{LoughranMitankin2021}.

\subsection{Structure of the paper}
The paper is structured as follows.
In Section~\ref{sec:preliminaries} we recall the Brauer group, the integral Brauer--Manin obstruction, and some geometry of the affine surface \(U_{m^2}\) and its compactification \(X_{m^2}\).
Section~\ref{sec:brauer_groups} is devoted to the computation of the Brauer groups and generators.
In Section~\ref{sec:local} we compute the local invariants, then, in Section~\ref{sec:proof}, we prove Theorem~\ref{thm:DP-BMO}.
Finally, in Section~\ref{sec:counting} we show that for $100\%$ of the parameters $m$ there is no integral Brauer--Manin obstruction.

\begin{notation}
    We denote by \(K\) a number field and with \(\bar{K}\) a fixed algebraic closure.
    For \(X\) a variety over \(K\) and \(L\) a field extension of \(K\) we denote by \(X_L\) the base change of \(X\) over \(L\). We write \(\bar{X}\) for the base change of \(X\) to \(\bar{K}\).
    For $K=\Q$, we set the convention that at the real place $\Z_{\infty} = \R$. Moreover, we denote with $\A_\Z$ the integral adelic ring, given by the restricted product $\R \times \prod_{p \text{ prime}} \Z_p$.
    For $\mathcal{X}$ an integral model of a variety $X$ defined over $\Q$, we denote with $\mathcal{X}(\A_\Z)$ the set of integral adelic points, that is the restricted product $\mathcal{X}(\A_\Z) = \mathcal{X}(\R) \times \prod_{p \text{ prime}}\mathcal{X}(\Z_p)$.
\end{notation}

\subsection*{Acknowledgements}
The authors deeply thank Daniel Loughran for his constant support and valuable comments on a first draft of this paper, and for the help with the Magma code. We also thank Vladimir Mitankin and H.~Uppal for useful discussions and help.

JA is supported by the Max Planck Institute for Mathematics in Bonn, which she thanks for their hospitality and financial support. She is a member of the INdAM group GNSAGA. JH is supported by EPSRC.

\subsection*{Statement on the use of AI}
During the preparation of this work, the authors used Anthropic's Claude Opus 5.5 and OpenAI's GPT-5.5 to solely assist with script debugging of the code and for grammar checking and language editing.
All the mathematical content and ideas are of the authors, who take full responsibility for the final manuscript.

\section{Preliminaries}\label{sec:preliminaries}
In this section we briefly recall the Brauer group and the integral Brauer--Manin obstruction \cite{Colliot2009Xu} and the geometry of the affine surface \(U_n\) and its compactification \(X_n\).

\subsection{Integral Brauer--Manin Obstruction}
\begin{definition}
    For a variety \(X\) over \(K\), the \emph{Brauer group} of \(X\) is the second cohomology group
    \[ \Br X = \H_{\text{\'et}}^2(X, \GG_m).\]
    We define the groups \(\Brzero X = \text{Im}(\Br K \rightarrow \Br X)\) and \(\Brone X= \ker(\Br X \rightarrow\Br \bar{X})\).  They fit into the chain of inclusions
    \[ 0 \subseteq \Brzero X \subseteq \Brone X \subseteq \Br X. \]
    The group \(\Brone X\) is called the \emph{algebraic Brauer group} and the quotient \(\Br X/\Brone X\) is called the \emph{transcendental Brauer group of \(X\)}.
\end{definition}

Let \(U\) be a variety over \(\Q\) and fix \(\U\) an integral model over \(\Z\) of \(U\) given by the same equation.
For every \(\alpha \in \Br U\) and every place \(v\) of \(\Q\) we have an \emph{invariant map} \(\inv_v \alpha: \U(\Z_v) \rightarrow \Q/\Z\) and there exists \(S_\alpha\) a finite set of places such that \(\inv_v\alpha\) is zero for all \(v \notin S_\alpha\). Thus we have a well defined \emph{Brauer--Manin pairing}
\[
\begin{matrix}
    \U(\A_\Z) \times \Br U & \longrightarrow & \Q/\Z\\
    ((x_v)_v, \alpha) & \longmapsto & \sum_{v} \inv_v(\alpha(x_v)).
\end{matrix}
\]
\begin{definition}
    The left kernel of the above pairing is defined as the \emph{integral Brauer--Manin set}
    \[
    \U(\A_\Z)^{\Br} := \left\{ (x_v)_v \in \U(\A_\Z) \, \mid \, \sum_v\inv_v(\alpha(x_v)) = 0 \ \forall \alpha \in \Br U\right\}.
    \]
    We have the chain of inclusions
    \[ \U(\Z) \subseteq \U(\A_\Z)^{\Br} \subseteq \U(\A_\Z) \]
    and we say that there is an \emph{integral Brauer--Manin obstruction to the Hasse principle} if \( \U(\A_\Z)^{\Br} = \emptyset\) but \(\U(\A_\Z) \neq \emptyset\).
\end{definition}

\subsection{Geometry and conic bundle}
Let \(n\) be an integer and let \(U_n\) be the affine surface defined by
\[ U_n: \; x^4+y^4-n = w^2.\]
We denote by \(X_n\) the natural compactification of \(U_n\), i.e.\ the degree-\(2\) del Pezzo surface in the weighted projective space \(\P(1,1,1,2)\) defined by 
\[ X_n: \; x^4+y^4-nz^4 = w^2. \]
Thus \(U_n = X_n \setminus D\) where \(D\) is the divisor on \(X_n\) given by \(z=0\), which is an elliptic curve with only four $\Q$-rational points, given by \([1:0:0:\pm1]\), \([0:1:0:\pm1]\).

When \(n = m^2\) is a square number, as shown in \cite[Proposition 5.2]{alessandriloughran}, the del Pezzo surface $X_n$ has a conic bundle structure:
\[\begin{matrix}
\pi : &X_n &\longrightarrow &\mathbb{P}^1\\
&(x,y,z,w)&\longmapsto &(x^2-mz^2,\; w-y^2).
\end{matrix}
\]
The conic bundle morphism induces an embedding of $X_n$ in $\P^1\times\P^2$, where we take the variables $(s,t) \times (x_0,x_1,x_2)$ on $\P^1 \times \P^2$, with image given by
\begin{equation}\label{eq:conic_bundle}
    (s^2-t^2)x_0^2 - 2stx_1^2 + m(s^2+t^2)x_2^2=0.
\end{equation}
\begin{lemma}\label{lem:singular-fibres}
Let \(m\) be an integer and let \(\pi\colon X_{m^2}\to\mathbb P^1_{[s:t]}\) be the conic bundle inducing \eqref{eq:conic_bundle}. The singular fibres of \(\pi\) lie above the closed points \(P\) listed below, and the fibre above \(P\) splits over \(\Q(P)(\sqrt{a_P})\).
\begin{table}[H]
    \begin{center}
    \begin{tabular}{c|c|c|c}
    \(P\) & \(\Q(P)\) & singular fibre & \(a_P\)\\\hline
    \({[1:0]}\) & \(\Q\) & \(x_0^2+mx_2^2=0\) & \(-m\)\\
    \({[0:1]}\) & \(\Q\) & \(x_0^2-mx_2^2=0\) & \(m\)\\
    \({[1:1]}\) & \(\Q\) & \(x_1^2-mx_2^2=0\) & \(m\)\\
    \({[1:-1]}\) & \(\Q\) & \(x_1^2+mx_2^2=0\) & \(-m\)\\
    \(s^2+t^2=0\) & \(\Q(i)\) & \(x_0^2-ix_1^2=0\) & \(i\)
    \end{tabular}
    \caption{Singular fibres of the conic bundle}
    \label{tab:sing_fibres}
    \end{center}
\end{table}
\end{lemma}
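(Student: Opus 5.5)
The conic bundle is given by \eqref{eq:conic_bundle}, which I write as a diagonal quadratic form in $(x_0,x_1,x_2)$ over $\Q(s,t)$:
\[
Q_{[s:t]} = (s^2-t^2)x_0^2 - 2st\,x_1^2 + m(s^2+t^2)x_2^2 .
\]
Its discriminant is $\Delta(s,t)=-2m\,st(s^2-t^2)(s^2+t^2)$, a binary form of degree $6$. I would first argue that the fibre over $P$ is a smooth conic exactly when $\Delta(P)\neq 0$. Indeed, a diagonal ternary form is nondegenerate precisely when all three coefficients are nonzero. Moreover, the three coefficients never vanish simultaneously, since $s^2-t^2=st=0$ forces $s=t=0$. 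So every fibre is a conic of rank at least $2$.

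This already reduces the singular locus to the zeros of $\Delta$, and each of these is a simple zero:
\begin{itemize}
\item $s=0$, i.e.\ the point $[0:1]$;
\item $t=0$, i.e.\ the point $[1:0]$;
\item $s=\pm t$, i.e.\ the points $[1:\pm 1]$;
\item $s^2+t^2=0$, a single closed point of degree $2$ with residue field $\Q(i)$.
\end{itemize}
This gives exactly the five closed points of Table~\ref{tab:sing_fibres}. (If one worries that $X_{m^2}\to\P^1\times\P^2$ might not be an isomorphism onto its image, I would simply note that the paper asserts the embedding, citing \cite[Proposition 5.2]{alessandriloughran}, so I work with \eqref{eq:conic_bundle}.)

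Next, for each point I substitute and read off the rank-$2$ form, dropping the vanishing coefficient.
\begin{itemize}
\item At $[1:0]$ the coefficients are $(1,0,m)$, giving $x_0^2+mx_2^2$.
\item At $[0:1]$ they are $(-1,0,m)$, giving $-(x_0^2-mx_2^2)$.
\item At $[1:1]$ they are $(0,-2,2m)$, giving $-2(x_1^2-mx_2^2)$.
\item At $[1:-1]$ they are $(0,2,2m)$, giving $2(x_1^2+mx_2^2)$.
\item At the degree-$2$ point, over $\Q(i)$ take $s=i$, $t=1$. Then $s^2+t^2=0$, $s^2-t^2=-2$ and $-2st=-2i$, so the form is $-2(x_0^2+ix_1^2)$.
\end{itemize}
Up to the harmless choice of root of $s^2+t^2$ and scaling, the last form matches the table: with the conjugate choice $s=-i$ one gets $-2(x_0^2-ix_1^2)$.

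Finally, a rank-$2$ binary form $u^2-a v^2$ over a field $k$ factors into two lines over $k(\sqrt a)$, and over no smaller extension when $a$ is not a square in $k$. Scaling by a nonzero constant does not change the lines, so the components are defined over $\Q(P)(\sqrt{a_P})$.
\begin{itemize}
\item $x_0^2+mx_2^2$ splits over $\Q(\sqrt{-m})$.
\item $x_0^2-mx_2^2$ splits over $\Q(\sqrt m)$.
\item $x_1^2-mx_2^2$ splits over $\Q(\sqrt m)$.
\item $x_1^2+mx_2^2$ splits over $\Q(\sqrt{-m})$.
\item $x_0^2-ix_1^2$ splits over $\Q(i)(\sqrt i)=\Q(\zeta_8)$.
\end{itemize}
This gives the column $a_P$. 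The main delicate point is the degree-$2$ fibre, namely keeping track of the sign of $i$ consistently. It is resolved by noting that the two roots are Galois conjugate and $\Q(i)(\sqrt i)=\Q(i)(\sqrt{-i})$.
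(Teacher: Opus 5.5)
Your proposal is correct and follows the paper's proof: you compute the discriminant $-2mst(s^2-t^2)(s^2+t^2)$ of the diagonal form and substitute each root, and your checks of the splitting fields and of the sign of $i$ at the degree-$2$ point fill in details the paper leaves implicit. One small fix: for rank exactly $2$ you need that no \emph{two} coefficients vanish at the same point, not merely that all three never vanish at once; this follows from the five factors being pairwise coprime, i.e.\ from the simple zeros you already note.
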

\begin{proof}
The determinant of the diagonal form in \eqref{eq:conic_bundle} is \(-2mst(s-t)(s+t)(s^2+t^2)\), and substituting each root into \eqref{eq:conic_bundle} gives the displayed fibre.
\end{proof}
The boundary divisor $D$ corresponds to taking \(x_2=0\) in the conic bundle model. Hence by the above lemma (and see also \cite[proof of Lemma 5.4]{alessandriloughran}), all the four rational points of \(D\) are singular points of the singular fibres. In particular, they are the intersection points of the two Galois conjugate lines in each singular fibre.

\section{Brauer Group of Fermat near misses}\label{sec:brauer_groups}
The Brauer group of \(X_n\) is studied in \cite{Kresch2004On2} and, using the \texttt{Magma} computations accompanying the paper, one finds that \(\Br X_n/\Br\Q\) is isomorphic to \((\Z/2\Z)^2\), in the case when $n$ is a square $m^2$, with $m\neq 1$.
We recompute this using \cite[Corollary 11.3.5]{ColliotSkoro2021}, finding explicit generators that we will need later.
\begin{corollary}\label{cor:generators}
Let \(m>1\) be odd and squarefree. Then \(\Br X_{m^2}/\Br \Q\) is isomorphic to \(\Z/2\Z \times \Z/2\Z\) and is generated by the classes of the quaternion algebras
\begin{equation}\label{eq:brauer-generators}
 \alpha_1=\left(\frac{s}{s-t},m\right),\qquad
 \alpha_2=\left(\frac{t}{s+t},-m\right),\qquad
\end{equation}
where \(s=x^2-m\) and \(t=w-y^2\), written on the affine chart \(z=1\).
\end{corollary}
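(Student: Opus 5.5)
We will use \cite[Corollary 11.3.5]{ColliotSkoro2021}, which describes the Brauer group of a conic bundle surface over $\P^1$ via residues at the singular fibres. For $\pi\colon X_{m^2}\to\P^1$ with singular fibres as in Lemma~\ref{lem:singular-fibres}, it identifies $\Br X_{m^2}/\Br\Q$ with the following group. Consider the tuples $(e_P)_P$, with $e_P\in \Q(P)^\times/\langle a_P\rangle\Q(P)^{\times 2}$ restricted to the classes in $\{1,a_P\}$. Require $\prod_P N_{\Q(P)/\Q}(a_P)^{e_P}\in \Q^{\times2}$. Then quotient by the image of the generic fibre class.

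Concretely, the candidate classes are quaternion algebras $(f,a)$ with $f\in\Q(\P^1)$. Such a class is unramified on $X_{m^2}$ exactly when its residues along the fibres are trivial. A residue is automatically trivial at fibres where the constant $a$ becomes a square on the split components, or more precisely where the residue $a$ equals $a_P$ modulo squares.

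The plan is as follows. First, list the fibres and the norms: $N(a_P)$ is $-m, m, m, -m$ for the four rational points and $N_{\Q(i)/\Q}(i)=1$ for $s^2+t^2=0$. A subset $S$ of the five fibres gives an element if $\prod_{P\in S}N(a_P)$ is a square. Since $m>1$ is squarefree, $\pm m$ and $-1$ are nonsquares. Hence the admissible subsets of the rational fibres are built from pairs with equal $a_P$, such as $\{[1:0],[1:-1]\}$ and $\{[0:1],[1:1]\}$, together with the free fibre $s^2+t^2$, which has norm $1$.

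The admissible subsets form a group $(\Z/2\Z)^3$. We then quotient by the class of the full set of fibres, the relation coming from the generic fibre (the total product condition). This leaves $(\Z/2\Z)^2$. We must check that $i$ is not a square in $\Q(i)$, and that $m$ is neither a square nor minus a square, so that no $a_P$ is trivial and no degeneration occurs. This is where $m>1$ is used, with squarefreeness giving a nontrivial $\pm m$.

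Next, construct explicit algebras realizing the generators. The algebra $\alpha_1=(s/(s-t),m)$ has divisor of $s/(s-t)$ supported on $[0:1]$ and $[1:1]$, with residue $m$ at both; this matches $a_P=m$ there, so it is unramified. At infinity the function has degree $0$, so there is no residue. Similarly $\alpha_2=(t/(s+t),-m)$ is supported on $[1:0]$ and $[1:-1]$, with $a_P=-m$. Unramifiedness along the split fibres holds because on each component $m$ (resp.\ $-m$) becomes a square in its residue field; we check this by the purity and residue formalism or directly from the table.

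Finally, show that $\alpha_1$, $\alpha_2$ and $\alpha_1+\alpha_2$ are not constant. Their images under the residue map of the corollary are the distinct nonzero elements computed above, and these are nonzero in the quotient; alternatively, one can evaluate at suitable local points to get varying invariants. We must also verify that the image of the generic fibre class does not identify $\alpha_1$ or $\alpha_2$ with $0$.

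I expect the main obstacle to be the bookkeeping in the exact sequence of \cite[Corollary 11.3.5]{ColliotSkoro2021}. This means correctly identifying the kernel relation, that is, which combination is killed, so that the quotient is exactly $(\Z/2\Z)^2$ rather than $(\Z/2\Z)^3$. In particular, we must check that the $\Q(i)$-fibre contributes nothing new after quotienting, and that $\alpha_1$ and $\alpha_2$ are independent representatives.
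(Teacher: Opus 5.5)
Your proposal is correct and follows essentially the same route as the paper. You apply \cite[Cor.~11.3.5]{ColliotSkoro2021} to the five singular fibres of Lemma~\ref{lem:singular-fibres} and use the norm condition to force $\epsilon_{[1:0]}=\epsilon_{[1:-1]}$ and $\epsilon_{[0:1]}=\epsilon_{[1:1]}$, with the $\Q(i)$-fibre free. You then quotient the resulting $(\Z/2\Z)^3$ by the all-ones vector of the generic conic and read off $\alpha_1,\alpha_2$ from the divisors of $s/(s-t)$ and $t/(s+t)$. The only slip is notational: the residues $e_P$ should lie in $\Q(P)^\times/\Q(P)^{\times 2}$, restricted to $\{1,a_P\}$, not in a quotient that already kills $a_P$. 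Your explicit unramifiedness check for $\alpha_1,\alpha_2$ is a detail the paper leaves implicit.
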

\begin{proof}
This is the standard computation of Prop.~11.3.4 and Cor.~11.3.5 in \cite{ColliotSkoro2021} applied to Lemma~\ref{lem:singular-fibres}. We use the same notation as in \cite{ColliotSkoro2021}, where \(S\) denotes the finite set of closed points \(P\in\mathbb P^1\) above which the fibre of the conic bundle is singular.

The generic conic class does not come from \(\Br\Q\), as there is a non-split fibre over $s^2+t^2 = 0$.
A vector $\epsilon=(\epsilon_0,\epsilon_\infty,\epsilon_1,\epsilon_{-1},\epsilon_i)\in\F_2^5$ indexed by the five singular fibres defines a class if and only if it satisfies the norm equation
\begin{equation}
    \prod_{P\in S}N_{\Q(P)/\Q}(a_P)^{\epsilon_P}=1 \quad\text{in }\Q^\times/\Q^{\times 2}.
\end{equation}
In our case, this translates to $(-m)^{\epsilon_0+\epsilon_{-1}}m^{\epsilon_\infty+\epsilon_1}=1$ in $\Q^\times/\Q^{\times2}$, i.e.\ $\epsilon_0=\epsilon_{-1}$ and $\epsilon_\infty=\epsilon_1$. Quotienting by the all-ones relation from the generic conic class (which maps to $0$ in $\Br X_{m^2} $) gives a two-dimensional group, hence
\[
  \Br X_{m^2}/\Br\Q\cong (\mathbb Z/2\Z)^2.
\]
The vectors selecting $\{[0:1],[1:1]\}$ and $\{[1:0],[1:-1]\}$ give $\alpha_1$ and $\alpha_2$, respectively, and the degree-two fibre corresponds to $\alpha_1+\alpha_2$.
\end{proof}

We will later use this explicit description to show that there is an integral Brauer--Manin obstruction.

We can now move to the computation of the Brauer group of the affine surface $U_n$. Our goal is to show Theorem \ref{thm:DP-affine-BG}.
We first compute the algebraic Brauer group of \(U_{m^2}\) and show that it is isomorphic to the Brauer group of \(X_{m^2}\).

We will need the following auxiliary lemma, which holds for general $n$.
We denote by $L$ the splitting field of $X_n$, i.e.\ the smallest normal number field $L$ such that $\Pic \bar X_n\simeq \Pic X_{n,L}$.
\begin{lemma}
    The Picard groups $\Pic \bar X_n$ and $\Pic \bar U_n$ are torsion free and we have a natural map
    $\Br X_n /\Brzero X_n \rightarrow \Brone U_n /\Brzero U_n$,
    with
    \[\Br X_n /\Brzero X_n \simeq \H^1(\Gal(L/\Q), \Pic \bar X_n)\]
    \[\Brone U_n /\Brzero U_n \simeq \H^1(\Gal(L/\Q), \Pic \bar U_n).\]
\end{lemma}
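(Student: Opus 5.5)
We split the statement into three parts: torsion-freeness of the Picard groups, the two Hochschild--Serre isomorphisms, and the natural map.

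\emph{Torsion-freeness.} Over $\bar\Q$, $\bar X_n$ is the blow-up of $\P^2$ in seven points, since $X_n$ is a smooth del Pezzo surface of degree $2$ for $n\neq 0$. Hence $\Pic \bar X_n\simeq \Z^8$. Because $U_n=X_n\setminus D$ with $D$ irreducible, the localisation sequence
\[
\Z\cdot[D]\longrightarrow \Pic \bar X_n\longrightarrow \Pic \bar U_n\longrightarrow 0
\]
gives $\Pic\bar U_n\simeq \Pic\bar X_n/\Z[D]$, where $D$ is the elliptic curve $z=0$. Here $[D]=-K_{X_n}$, since $D$ is a hyperplane section of weight $1$ in $\P(1,1,1,2)$. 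So $\Pic \bar U_n$ is torsion free exactly when $-K$ is primitive in $\Pic\bar X_n$. In the basis $\ell,e_1,\dots,e_7$ we have $-K=3\ell-\sum e_i$, and the coefficient $-1$ on $e_1$ shows it is primitive. The same sequence also shows $\bar\Q[U_n]^\times=\bar\Q^\times$: a nonconstant unit would have divisor a nonzero multiple of $D$, which is impossible because $D^2=2\neq 0$.

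\emph{Hochschild--Serre isomorphisms.} For a smooth variety $V$ over $\Q$ with $\bar\Q[V]^\times=\bar\Q^\times$, the Hochschild--Serre spectral sequence gives
\[
\Brone V/\Brzero V\simeq \ker\bigl(\H^1(\Q,\Pic\bar V)\to \H^3(\Q,\GG_m)\bigr),
\]
and $\H^3(\Q,\GG_m)=0$ for number fields. Apply this to $V=X_n$ and $V=U_n$.

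\begin{itemize}
\item For $X_n$, which is rational over $\bar\Q$, we have $\Br\bar X_n=0$, so $\Br X_n=\Brone X_n$.
\item The Galois action on $\Pic\bar V$ factors through $\Gal(L/\Q)$. For $V=X_n$ this is the definition of $L$; for $V=U_n$ it holds because $\Pic\bar U_n$ is a quotient of $\Pic\bar X_n$.
\item The inflation--restriction sequence identifies $\H^1(\Q,\Pic\bar V)$ with $\H^1(\Gal(L/\Q),\Pic\bar V)$, because $\H^1(\Gal(\bar\Q/L),\Pic\bar V)=\Hom(\Gal(\bar\Q/L),\Z^r)=0$ for a torsion-free module with trivial action.
\end{itemize}

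\emph{The natural map.} Restriction $\Br X_n\to\Br U_n$ lands in $\Brone U_n$, since $\Br\bar X_n=0$ and restriction commutes with base change. It sends $\Brzero X_n$ into $\Brzero U_n$, so it induces the stated map. Under the isomorphisms above this map is the one induced by the quotient $\Pic\bar X_n\to\Pic\bar U_n$, by functoriality of the spectral sequence.

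The only step needing real care is the primitivity of $[D]$ together with the triviality of the units. Everything else is standard.
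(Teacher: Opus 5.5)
Your proof is correct and follows essentially the same route as the paper. Both use Hochschild--Serre, then inflation--restriction with $\Pic$ torsion free and carrying trivial $\Gal(\bar\Q/L)$-action, then primitivity of $[D]=-K_{X_n}$ in $\Z\cdot[D]\to\Pic\bar X_n\to\Pic\bar U_n\to 0$; you also fill in details the paper leaves implicit ($\bar\Q[U_n]^\times=\bar\Q^\times$, $\H^3(\Q,\GG_m)=0$, $\Br\bar X_n=0$, and an explicit primitivity check). The only difference is that you obtain the natural map by restricting Brauer classes and invoking functoriality, whereas the paper reads it off the long exact sequence in $\Gal(L/\Q)$-cohomology, which also gives its injectivity from $\H^1(\Gal(L/\Q),\Z)=0$ --- a fact the next proposition relies on.
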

\begin{proof}
For any number field $K$, as in \cite[\S 3.1]{Lyczak2023}, by the Hochshild--Serre spectral sequence, we have the group isomorphisms
\[ \Br X_n /\Brzero X_n \simeq \H^1(K, \Pic \bar X_n)\]
and
\[\Brone U_n /\Brzero U_n \simeq \H^1(K, \Pic \bar U_n).\]
Let $L$ be the splitting field of $X_n$.
The Picard group $\Pic (X_{n,L})$ is a free $\Z$-module of rank $8$ and, since the anticanonical class $-K_{X_n}$ is primitive, $\Pic (U_{n,L})$ is also a free $\Z$-module, of rank $7$. The action of $\Gal(\bar\Q/L)$ on both the Picard groups is trivial, thus
$$\H^1(\Gal(\bar\Q/L), \Pic (X_{n,L})) = \Hom(\Gal(\bar\Q/L), \Pic (X_{n,L}))= 0,$$ 
where the last equality holds since $\Gal(\bar\Q/L)$ is a profinite group and $\Pic (X_{n,L})$ is torsion-free and the same holds for $\Pic (U_{n,L})$.
From the inflation-restriction sequence, it follows that
$$ \Br X_n/\Brzero X_n \simeq \H^1(\Q, \Pic (X_{n,L})) \simeq \H^1(\Gal(L/\Q), \Pic (X_{n,L})),$$
$$ \Brone U_n/\Brzero U_n \simeq \H^1(\Q, \Pic(U_{n,L}))  \simeq \H^1(\Gal(L/\Q), \Pic(U_{n,L})).$$

From the irreducibility of $D$, we have the short exact sequence
\[0 \rightarrow \mathbb{Z}\cdot[D] \rightarrow \Pic(\bar X_n) \rightarrow
\Pic(\bar U_n) \rightarrow 0, \]
which induces the long exact sequence
\begin{align*}
0 = &\H^1(\Gal(L/\Q),\Z) \rightarrow \H^1(\Gal(L/\Q),\Pic(\bar X_n)) \rightarrow\\
 &\rightarrow \H^1(\Gal(L/\Q),\Pic(\bar U_n)) \rightarrow \H^2(\Gal(L/\Q),\Z) \rightarrow \cdots,
\end{align*}
and so gives the desired natural map $\Br X_n /\Brzero X_n \rightarrow \Brone U_n /\Brzero U_n$.
\end{proof}
\begin{proposition}\label{prop:algebraic_brauer_group}
    Let $m > 1$ be an odd squarefree integer. Then the map $\Br X_{m^2} \rightarrow \Brone U_{m^2}$ is an isomorphism.
\end{proposition}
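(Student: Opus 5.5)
By the preceding lemma, the map $\Br X_{m^2}\to\Brone U_{m^2}$ induces on quotients modulo constants the map
\[
\H^1(\Gal(L/\Q),\Pic\bar X_{m^2})\to\H^1(\Gal(L/\Q),\Pic\bar U_{m^2}),
\]
and this map is injective because $\H^1(\Gal(L/\Q),\Z)=0$. Constants match: $\Br\Q\to\Br X_{m^2}$ is injective since $X_{m^2}(\Q)\neq\emptyset$ (e.g.\ $[1:0:0:1]$), and this rational point lies on $D$, hence also gives a section showing $\Br\Q\hookrightarrow\Br U_{m^2}$. So the statement reduces to showing that $\H^1(G,\Pic\bar U_{m^2})$ has order $4$, where $G=\Gal(L/\Q)$. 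Equivalently, from the long exact sequence, we must show that the connecting map
\[
\H^1(G,\Pic\bar U_{m^2})\to\H^2(G,\Z)=\Hom(G,\Q/\Z)
\]
vanishes. Its image is contained in the kernel of $\H^2(G,\Z\,[D])\to\H^2(G,\Pic\bar X_{m^2})$.

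My plan is to determine $G$ and its action on the $56$ lines of $\bar X_{m^2}$ explicitly, using the conic bundle structure. The $10$ lines in the singular fibres of Lemma~\ref{lem:singular-fibres} are defined over $\Q(\sqrt m,\sqrt{-m},\sqrt i)$. The remaining lines are bisections, which I expect to be defined over an extension obtained from the bitangents of $x^4+y^4-m^2z^4$; these should involve $\sqrt 2$, $i$, $\sqrt m$ and a fourth root such as $\sqrt[4]{2}$ or $(1+i)$-type radicals. Concretely, I would write down the lines $w=ax^2+bxy+cy^2+\dots$ by solving for bitangents of the plane quartic, and read off $L$ and the generators of $G$ acting on the lines.

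The conic bundle constrains the possible actions: $G$ lies in the subgroup of $W(E_7)$ preserving the fibre class, acting on the $5$ singular fibres according to Table~\ref{tab:sing_fibres}. I would therefore follow the strategy announced in the introduction. I would enumerate in \texttt{Magma} all subgroups $H$ of this stabiliser that are compatible with the known field data: the fibres over $[1:0],[0:1],[1:\pm1]$ are each individually Galois-stable with their two lines swapped by $\sqrt{\pm m}$, and the fibres over $s^2+t^2=0$ are swapped by complex conjugation. For each such $H$, I would compute both $\H^1(H,\Pic\bar X)$ and $\H^1(H,\Pic\bar X/\Z K_X)$ directly, noting that $\Pic\bar U=\Pic\bar X/\Z[D]$ with $[D]=-K_X$. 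Since $m$ is odd and squarefree with $m>1$, the fields $\Q(\sqrt m)$, $\Q(\sqrt{-m})$ and $\Q(i)$ are distinct, which pins down which subgroups actually occur. The expected outcome is that for every admissible $H$ with $\H^1(H,\Pic\bar X)\cong(\Z/2)^2$, the affine group is also $(\Z/2)^2$.

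As a conceptual cross-check, I would look for an element $\beta\in\Brone U$ not coming from $X$. Such an element would have nontrivial residue along $D$, given by a character of $\Q(D)$ restricted from a quadratic field $\Q(\sqrt d)$. Its existence would force $[D]$ to become divisible in a suitable sense over $\Q(\sqrt d)$, for instance $-K_X$ being a norm of a divisor class. I would rule this out using the fact that $\Pic X_{m^2}$, or its relevant Galois invariants, has $-K_X$ primitive and not a norm.

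The main obstacle is the explicit determination of $G$ and its action on the bisection lines, together with the uniformity in $m$: $L$ depends on $m$, and degenerate coincidences could enlarge $\H^1$ for special $m$. I would handle this by checking that all admissible subgroup cases give the same answer, so that no case distinction on $m$ is needed.
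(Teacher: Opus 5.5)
Your proposal follows essentially the same route as the paper. You reduce via the preceding lemma to comparing $\H^1(\Gal(L/\Q),\Pic\bar X_{m^2})$ with $\H^1(\Gal(L/\Q),\Pic\bar U_{m^2})$, restrict the possible Galois images in $W(E_7)$ using the conic bundle structure, check the surviving cases in \texttt{Magma}, and conclude with the five lemma. The paper reads off $L=\Q(\zeta_8,\sqrt m)$ from the explicit lines, so no $\sqrt[4]{2}$ appears, $G\cong(\Z/2\Z)^3$, and only $7$ subgroup classes remain.

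One small slip: $[1:0:0:1]$ lies on $D$, so it is not a point of $U_{m^2}$ and gives no section there. Injectivity of $\Br\Q\to\Br U_{m^2}$ should instead come from $\Br\Q\hookrightarrow\Br X_{m^2}\hookrightarrow\Br U_{m^2}$, where the second map is injective by purity.
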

\begin{proof}
We first establish that the natural map in the above lemma is an isomorphism.

To compute the splitting field $L$ of $X_{m^2}$, one can find in \cite[\S~2]{Kresch2004On2} a complete description of the $56$ lines on $X_n$.
From it, under our assumption $n=m^2$, with \(m \neq 1\) odd squarefree, one can compute that the splitting field of $X_n$ is $L=\Q(\zeta_8, \sqrt[4]{n})$, where $\zeta_8$ is a fixed choice of a primitive $8$th root of unity and $\sqrt[4]{n}$ is a fixed choice of a $4$th root of $n$.
Then $L=\Q(\zeta_8, \sqrt{m})$, which has Galois group over $\Q$ isomorphic to $(\Z/2\Z)^3$.

To compute the cohomology groups $\H^1(\Gal(L/\Q), \Pic \bar X_{m^2})$ and $\H^1(\Gal(L/\Q), \Pic \bar U_{m^2})$, we look at the action of the absolute Galois group $G_\Q:=\Gal(\bar\Q/\Q)$ on the respective Picard groups. The group $G_\Q$ acts on $\Pic\bar X_{m^2}$ preserving intersection pairings and the anticanonical divisor, hence we have a representation
$$ \rho: G_{\Q} \longrightarrow W(E_7),$$
where $W(E_7)$ is the Weyl group of the root system $E_7$ (\cite[Ch.\ IV, \S25]{Manin1986Cubic}).

The action factors through $\Gal(L/\Q)$, since $\Gal(\bar\Q/L)$ acts trivially. Thus the Galois action is determined by the image $G$ of $\rho$ in $W(E_7)$, which is isomorphic to $\Gal(L/\Q) \simeq (\Z/2\Z)^3$.

With the help of \texttt{Magma}, we list all the possible subgroups $G$ of $W(E_7)$ isomorphic to $(\Z/2\Z)^3$, up to conjugacy. There are $8074$ conjugacy classes of subgroups in $W(E_7)$, of which $211$ are isomorphic to the desired group $(\Z/2\Z)^3$. To ease the analysis, we use the conic bundle structure of our surface.
As seen in Table \ref{tab:sing_fibres}, the conic bundle has $4$ singular non-split fibres over $\Q$. Each singular fibre is made of two lines intersecting in one rational point of the divisor $D$ and the Galois action swaps the lines.
Thus in our computation we can just consider the case where there are at least $4$ orbits of pairs of lines. Adding also the condition that $\Br X_{m^2}/\Br\Q$ has rank $2$, there are only $7$ cases left.
For each of these, we find that $\H^1(\Gal(L/\Q), \Pic(\bar X_{m^2}))$ and $\H^1(\Gal(L/\Q), \Pic(\bar U_{m^2}))$ are isomorphic. The \texttt{Magma} code used for the computations is available at \cite{GitHubRepo}.

We have the following diagram with exact rows
\begin{figure}[H]
    \centering
    \begin{tikzpicture}
        \node(a0) at (-5,2) {$0$};
        \node(a) at (-3,2) {$\Brzero X_{m^2} $};
		\node(b) at (0,2) {$\Br X_{m^2}$};
		\node(c) at (3,2) {$\dfrac{\Br X_{m^2}}{\Brzero X_{m^2} }$};
        \node(c0) at (5,2) {$0$};
        \node(d0) at (-5,0) {$0$};
		\node (d) at (-3,0) {$\Brzero U_{m^2} $};
		\node(e) at (0,0) {$\Brone U_{m^2}$};
		\node(f) at (3,0) {$\dfrac{\Brone U_{m^2} }{\Brzero U_{m^2} }$};
        \node(f0) at (5,0) {$0.$};

        \node at (-2.7,1) {$\simeq$};
        \node at (3.3,1) {$\simeq$};
            
		\draw[-stealth] (a0) --(a);	
		\draw[-stealth] (a) --(b);
		\draw[-stealth] (b) --(c);
        \draw[-stealth] (c) --(c0);
		\draw[-stealth] (a) --(d);
		\draw[-stealth] (b) --(e);
		\draw[-stealth] (c) --(f);
        \draw[-stealth] (d0) --(d);
		\draw[-stealth] (d) --(e);
		\draw[-stealth] (e) --(f);
        \draw[-stealth] (f) --(f0);
    \end{tikzpicture}
\end{figure}
By the Five Lemma, we also have an isomorphism between $\Br X_{m^2}$ and $\Brone U_{m^2} $.
\end{proof}

To complete the proof of Theorem \ref{thm:DP-affine-BG}, we follow a similar strategy to the one in \cite[\S 7.1]{uppal2024integral} and \cite{Colliot2012Wittenberg}.
We use the purity sequence from \cite[Theorem 3.7.1]{ColliotSkoro2021}
\begin{equation}\label{eq:purity}
 0 \rightarrow \Br X_{m^2} \rightarrow\Br U_{m^2} \xrightarrow{\partial_D} \H^1_{\text{\'et}}(D, \Q/\Z).
\end{equation}
As in the mentioned papers, a key point in the proof is the study of the isogeny class of the elliptic curve $D$. Indeed, Proposition 3.5 of \cite{Lyczak2023}, which we now recall for the reader's convenience, relates the torsion elements of the transcendental part of the Brauer group of $U_n$ to the existence of rational isogenies.
\begin{proposition}[{\cite[Proposition 3.5]{Lyczak2023}}] \label{prop:torsion_isogenies}
    Let \(F\) be a field of characteristic \(0\) and \(S\) a smooth, geometrically rational projective surface over \(F\). Denote by \(H\) a hyperplane over \(F\) and \(V\) the affine surface \(V := S\setminus H\). If \(C:= H \cap S\) is a smooth, genus one curve, then a class of order \(n\) in \(\Br V / \Brone V\) induces an \(F\)-rational \(n\)-isogeny \[J(C) \twoheadrightarrow  C'\] to an elliptic curve \(C'\) with an \(F\)-point of order \(n\).
\end{proposition}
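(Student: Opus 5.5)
This is Proposition 3.5 of \cite{Lyczak2023}, so we only need to reconstruct its argument, following the residue strategy of \cite{Colliot2012Wittenberg}. Let $\beta \in \Br V$ have class of order $n$ in $\Br V/\Brone V$. The plan has three steps: take residues along $C$, use geometric rationality of $S$ to land in the $n$-torsion of $J(C)$, and read off the isogeny.

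\emph{Residues.} The purity sequence \eqref{eq:purity} for $V=S\setminus C$, over $F$ and over $\bar F$, reads
\[
0\to \Br S\to \Br V\xrightarrow{\partial_C}\H^1(C,\Q/\Z),\qquad 0\to\Br\bar S\to\Br\bar V\to \H^1(\bar C,\Q/\Z).
\]
Since $S$ is smooth, projective and geometrically rational, $\Br \bar S=0$. Hence $\Br\bar V$ injects into $\H^1(\bar C,\Q/\Z)=\Hom(\pi_1(\bar C),\Q/\Z)$. Because $\bar C$ is a genus one curve, its torsion part is $\bigoplus_n J(C)[n]^\vee$. So the image $\bar\beta\in\Br\bar V$ has order exactly $n$: its order divides $n$, and it is not killed by any smaller multiple, since $\beta$ has order $n$ modulo $\Brone V=\ker(\Br V\to\Br\bar V)$. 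Moreover $\bar\beta$ is $\Gal(\bar F/F)$-invariant, since it comes from $V$.

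Its residue $\partial(\bar\beta)\in\H^1(\bar C,\Z/n)=\Hom(J(C)[n],\Z/n)$ is therefore a Galois-invariant character of exact order $n$. Here I would use that the purity sequence is compatible with base change, so the map to $\H^1(\bar C,\Q/\Z)$ is Galois-equivariant.

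\emph{The isogeny.} Take the Galois-equivariant surjection $\chi\colon J(C)[n]\to\Z/n$ just obtained, and let $K=\ker\chi$. Then $K$ is an $F$-rational subgroup scheme with $J(C)[n]/K\simeq\Z/n$ carrying trivial Galois action. Dually, the Weil pairing identifies $J(C)[n]^\vee\simeq J(C)[n]\otimes\mu_n^{-1}$. The cleaner route is to set $C'=J(C)/K$ and consider the composite $J(C)\to C'\to J(C)$ of the quotient by $K$ followed by the dual map. Equivalently, $C'$ is the quotient by $K$, and the dual isogeny $\hat\phi\colon C'\to J(C)$ has kernel $J(C)[n]/K$, which is Cartier dual to $K$. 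I need to arrange which of the two, kernel or quotient, is the constant group $\Z/n$.

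The statement wants an $n$-isogeny $\phi\colon J(C)\to C'$ such that $C'$ has an $F$-point of order $n$. The point must lie in $\ker\hat\phi=\phi(J(C)[n])\simeq J(C)[n]/\ker\phi$. So I should take $\ker\phi=K$ of order $n$. Is $|K|=n$? We have $|J(C)[n]|=n^2$ and $|\Z/n|=n$, so $|K|=n$, and $\phi$ is indeed an $n$-isogeny. Then $\ker\hat\phi\simeq J(C)[n]/K\simeq\Z/n$ as Galois modules, which gives an $F$-point of order $n$ on $C'$.

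\emph{Main obstacle.} The delicate step is justifying the identification $\H^1(\bar C,\Z/n)\simeq\Hom(J(C)[n],\Z/n)$ as Galois modules, in particular the potential twist by $\mu_n$. I would check this by working with $\pi_1^{ab}(\bar C)\simeq T(J(C))$, which is canonical via Albanese and needs no twist because $\H^1$ of $\Z/n$-coefficients is $\Hom(\pi_1,\Z/n)$. Invariance of $\chi$ then yields the equivariant quotient claimed above.
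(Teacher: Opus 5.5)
The paper gives no proof of this statement --- it is quoted from \cite[Proposition 3.5]{Lyczak2023} --- and your reconstruction is correct: it is the standard residue argument behind the cited result. The steps are injectivity of $\Br\bar V\to\H^1(\bar C,\Q/\Z)$ from $\Br\bar S=0$, the Galois-equivariant identification $\H^1(\bar C,\Z/n)\simeq\Hom(J(C)[n],\Z/n)$, and the quotient $J(C)\to J(C)/\ker\chi$, whose dual has constant kernel $J(C)[n]/\ker\chi\simeq\Z/n$.

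Two cosmetic points. First, $\H^1(\bar C,\Q/\Z)$ is the direct limit $\varinjlim_n\Hom(J(C)[n],\Z/n)$, not a direct sum. Second, you may note that $\ker\chi$ is automatically cyclic (indeed $\ker\chi\simeq\mu_n$ via the Weil pairing), so the isogeny is a cyclic $n$-isogeny in the usual sense.
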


On the elliptic curve \(D\) we can perform a change of variables, choosing the point \([0:1:0:1]\) as the identity element \(O_D\), so that \(D\) is in Weierstrass equation
\[ y^2 = x^3 -4x,\]
i.e.\ the curve \href{https://www.lmfdb.org/EllipticCurve/Q/64/a/3}{\texttt{64.a3}} in the LMFDB.
From the isogeny graph associated to this curve (see e.g.\ \cite[Table 5]{Chiloyan2021Isogeny}) we see that its isogeny class consists of \(4\) elliptic curves, and every cyclic isogeny have degree \(2\).
More precisely, the isogeny class of \(D\) is then given by the following.
\begin{table}[H]
    \centering
    \begin{tabular}{|c|c|c|}
    \hline
         Elliptic curve & Isogeny degree & Mordell-Weil group \\
         \hline
         \(D\) & 1 &  \(\Z/2\Z \times \Z/2\Z\) \\
         \(E_1: y^2 = x^3 +x\) & 2 & \(\Z/2\Z\) \\
         \(E_2: y^2 = x^3 -44x+112\) & 2 & \(\Z/4\Z\) \\
         \(E_3: y^2 = x^3 -44x -112\) & 2 & \(\Z/2\Z\)\\
    \hline
    \end{tabular}
    \caption{Isogeny class of \(D\) over $\Q$}
    \label{tab:isogeny_class}
\end{table}

The following auxiliary lemma shows that this table still holds over any quadratic field \(K=\Q(\sqrt{m})\) with $m > 2$ (i.e.\ when \(K\) is not contained in the splitting field).

\begin{lemma}\label{lem:auxiliary}
    Let \(K = \Q(\sqrt{m})\), with \(m\) a positive integer, \(m \neq 1,2\). Then
    \begin{itemize}
        \item[(a)] \(D\) has exactly three \(2\)-isogenies over \(K\);
        \item[(b)] For each \(i=1,2,3\), \(E_i(K)[2] \simeq \Z/2\Z\);
        \item[(c)] The only curve with \(K\)-rational \(4\)-torsion points is \(E_2\).
    \end{itemize}
\end{lemma}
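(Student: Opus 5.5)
All three parts reduce to explicit computations with the Weierstrass models, done over $\Q$ and then checked for stability under passage to $K=\Q(\sqrt{m})$. The guiding principle: a new point, isogeny or torsion structure appearing over $K$ forces a specific quadratic field to lie in $K$. Each such field will be one of $\Q(i)$, $\Q(\sqrt{2})$, $\Q(\sqrt{-2})$, and these are excluded by the hypotheses. Since $m>0$, $K$ is real quadratic, hence $K\neq\Q(i),\Q(\sqrt{-2})$. Since $m\neq 2$ and $m\neq 1$ (and, implicitly, $m$ is not a square times $2$, e.g.\ $m$ squarefree as in our application), $K\neq \Q(\sqrt{2})$ and $K\neq \Q$.

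\emph{Part (a).} The $2$-isogenies of $D\colon y^2=x^3-4x$ correspond to its rational $2$-torsion points, namely $(0,0)$ and $(\pm 2,0)$. All of these are defined over $\Q$, so $D$ has exactly three $2$-isogenies over $K$: their kernels are the three subgroups of order $2$ in $D[2]\simeq(\Z/2\Z)^2$. By V\'elu's formulas, the quotients by $\langle(0,0)\rangle$, $\langle(2,0)\rangle$ and $\langle(-2,0)\rangle$ are isomorphic over $\Q$ to $E_1,E_2,E_3$ from Table~\ref{tab:isogeny_class}, matching the LMFDB isogeny graph.

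\emph{Part (b).} For each $E_i$, the $2$-division polynomial is the cubic on the right-hand side.
\begin{itemize}
\item For $E_1$, $x^3+x=x(x^2+1)$, and $x^2+1$ splits only over $\Q(i)$.
\item For $E_2$ and $E_3$, factoring gives $x^3-44x\pm112=(x\mp 4)(x^2\pm4x-28)$. The discriminant of the quadratic factor is $16+112=128=2^7$, so it splits exactly over $\Q(\sqrt2)$.
\end{itemize}
Since $\Q(i),\Q(\sqrt2)\neq K$, each $E_i(K)[2]\simeq\Z/2\Z$.

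\emph{Part (c).} This is the main step. By (b), a $K$-rational point of order $4$ on any $E_i$ must double to the unique $K$-rational $2$-torsion point $T=(e,0)$. The same holds for $D$ with each of its three $2$-torsion points. Halving is controlled by the standard criterion: $T=(e,0)$ is divisible by $2$ in $E(K)$ if and only if $e-e'$ and $e-e''$ are squares in $K$, where $e',e''$ are the other two roots, viewed in the appropriate extension when they are not $K$-rational. Equivalently, the $x$-coordinate of a halving point satisfies $(x-e)^2=(e-e')(e-e'')$ with $x$ and $y$ rational. The computations run as follows.
\begin{itemize}
\item \emph{The curve $D$.} For $e=0$ we need $\pm2$ to be squares; for $e=\pm2$ we need $\pm 4$ and $\pm 2$ to be squares. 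In every case this forces $\sqrt{-1}$, $\sqrt{2}$ or $\sqrt{-2}$ into $K$, so $D$ has no $K$-rational $4$-torsion.
\item \emph{The curve $E_1$.} With $T=(0,0)$, halving needs $x^2=1$ and $y^2=x^3+x=\pm2$. This forces $\sqrt{\pm2}\in K$, impossible.
\item \emph{The curve $E_3$.} For $T=(-4,0)$ we get $(x+4)^2=16-16+(-28)$, after recomputing the exact constant. It is a negative multiple of a square, or otherwise a non-square up to $2$, so it again requires $\sqrt{-1}$ or $\sqrt{\pm 2}$ in $K$.
\item \emph{The curve $E_2$.} Its $4$-torsion point is already defined over $\Q$, so it certainly survives over $K$.
\end{itemize}

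The obstacle is purely bookkeeping: getting the halving constants exactly right for $E_3$ and $D$. In each case one must check that the required square roots are $\sqrt{-1}$, $\sqrt{2}$ or $\sqrt{-2}$. Any other value, say $\sqrt{7}$, would break the statement for that particular $m$. I would verify these constants directly, with Magma as a cross-check over a generic quadratic field.
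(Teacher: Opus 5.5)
Parts (a) and (b) are exactly the paper's argument: $D$ has full rational $2$-torsion, and the irrational roots of the $2$-division polynomials of $E_1,E_2,E_3$ lie in $\Q(i)$ or $\Q(\sqrt2)$.

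For (c) you take a different, more self-contained route. The paper invokes explicit formulas for the $4$-torsion points in terms of the roots of the cubic and asserts that only the rational ones lie in $\Q(\sqrt m)$. You instead use (b) to reduce to halving the unique $K$-rational $2$-torsion point, and test halvability with the tangent-line condition. Your version shows exactly which fields ($\Q(i)$, $\Q(\sqrt{\pm2})$) would create new $4$-torsion and avoids the black box; the paper's is shorter but leaves the check to the reader.

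You are also right that the real hypothesis is $K\neq\Q(\sqrt2)$, not $m\neq 2$. For $m=8$ both (b) and (c) fail; for instance, $(1,\sqrt2)$ is a point of order $4$ on $E_1$. So the lemma implicitly needs $m$ squarefree, as in the application.

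However, your $E_3$ bullet is wrong as written. With $g(x)=x^2-4x-28$ one gets $(e-e')(e-e'')=g(-4)=16+16-28=4$, not $-28$. So the $x$-condition $(x+4)^2=4$ is satisfied over $\Q$, by $x=-2$ and $x=-6$, and yields no obstruction. Your sentence about a ``negative multiple of a square'' therefore has no content here.

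The obstruction lies entirely in the $y$-condition. The cubic $x^3-44x-112$ equals $-32$ at $x=-2$ and $-64$ at $x=-6$, and neither is a square in the real field $K$. Equivalently, $-4-(2+4\sqrt2)=-(2+\sqrt2)^2$ is not a square in the real field $K(\sqrt2)$.

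This also shows that your phrasing of the halving criterion (``$e-e'$ and $e-e''$ squares in $K$, viewed in the appropriate extension'') is too loose when $e',e''\notin K$. The correct test is that $x_0=e\pm\sqrt{(e-e')(e-e'')}$ lies in $K$ \emph{and} $f(x_0)$ is a square in $K$. Equivalently, $e-e'$ must be a square in $K(e')$. For $E_3$ only the second half of the first formulation fails.

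A cosmetic point in (a): by V\'elu, $D/\langle(2,0)\rangle\cong E_3$ and $D/\langle(-2,0)\rangle\cong E_2$, not the other way round. This does not affect the lemma.
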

\begin{proof}
The elliptic curve \(D\) has full rational \(2\)-torsion: the order \(2\)-points are obtained by setting \(x^3-4x=0\), and they correspond to the \(4\) rational points of \(D\). Then, over any number field \(K\), \(D\) has precisely three degree-\(2\) isogenies (one for each order \(2\) subgroup of \(D(K)[2]\)). This proves point (a).

The abscissas of the \(2\)-torsion points of the curves \(E_i\)'s are the roots of the polynomials \(x^3+x\), \(x^3-44x+112 = (x-4)(x^2+4x-28)\) and \(x^3-44x-112 = (x+4)(x^2-4x-28)\), respectively. Since \(m \neq 2\), it is then immediate that the only points defined over \(K\) are the ones corresponding to rational roots and hence rational points.
In \cite[\S 5]{Bandini2016DivisionFeilds2} one can find explicit formulas for the \(4\)-torsion points, in terms of the roots of the Weierstrass polynomial defining the curve.
It is then easy to see that the only \(4\)-torsion points defined over \(\Q(\sqrt{m})\) (and \(m \neq 2\)) are the ones with rational coordinates, namely the ones of \(2\)-torsion and the two rational \(4\)-torsion points of \(E_2\), which have coordinates \((x,y)\) equal to \((6,8)\), \((6,-8)\), proving points (b) and (c).
\end{proof}

We are now ready to prove Theorem \ref{thm:DP-affine-BG}.
\begin{proof}[Proof of Theorem \ref{thm:DP-affine-BG}]
    From Proposition~\ref{prop:torsion_isogenies} and Table~\ref{tab:isogeny_class}, the only possible torsion in $\Br U_{m^2}/\Br X_{m^2}$ is $1$ and $2$. Thus we now show that there is no $2$ torsion.
    Let \(\mathcal{A} \in \Br U_{m^2}\setminus \Br X_{m^2}\) of $2$ torsion. We want to derive a contradiction.

    By Proposition~\ref{prop:algebraic_brauer_group}, \(\Br X_{m^2} \simeq \Brone U_{m^2}\), so \(\mathcal{A} \notin \Brone U_{m^2} \).
    Let \(K = \Q(\sqrt{m})\), where \(n = m^2\) in the definition of \(X_{m^2}\) and \(U_{m^2}\), then the base change \(\mathcal{A}_K \notin \Brone U_{m^2,K} \).
    Since 
    \(\Br X_{m^2,K}  \subseteq \Brone U_{m^2,K}  \),
    we have that \(\mathcal{A}_K \notin \Br X_{m^2,K} \).

     Choose the points \(P_1 = [0:1:0:1]\), \(P_2 = [0:1:0:-1] \in D(K)\). The lines
    \begin{align*}
        L_1 &: x-\sqrt{m}z=0, \quad w-y^2=0,\\
        L_2 &: x-\sqrt{m}z=0, \quad w+y^2=0
    \end{align*}
    are defined over \(K\) and intersect \(D\) transversally at the points \(P_1\) and \(P_2\), respectively.

    For each \(i = 1,2\), we get the following commutative diagram, where the rows come from the purity sequence \eqref{eq:purity}:
    \begin{figure}[H]
        \centering
        \begin{tikzpicture}
            \node(a0) at (-6,2) {$0$};
            \node(a) at (-3,2) {\(\Br X_{m^2,K} \)};
    		\node(b) at (0,2) {\(\Br U_{m^2,K} \)};
    		\node(c) at (4,2) {\(\H^1_{\text{\'et}}(D_{K}, \Q/\Z)\)};
            \node(d0) at (-6,0) {\(0\)};
    		\node (d) at (-3,0) {\(\Br L_i\)};
    		\node(e) at (0,0) {\(\Br(L_i \setminus P_i)\)};
    		\node(f) at (4,0) {\(\H^1_{\text{\'et}}(K(P_i), \Q/\Z).\)};
    
            \node at (1.7,2.3) {\(\partial_D\)};
            \node at (1.7,0.3) {\(\partial_{P_i}\)};
            \node at (4.4, 1) {\(\ev_i\)};
                
    		\draw[-stealth] (a0) --(a);	
    		\draw[-stealth] (a) --(b);
    		\draw[-stealth] (b) --(c);
    		\draw[-stealth] (a) --(d);
    		\draw[-stealth] (b) --(e);
    		\draw[-stealth] (c) --(f);
            \draw[-stealth] (d0) --(d);
    		\draw[-stealth] (d) --(e);
    		\draw[-stealth] (e) --(f);
        \end{tikzpicture}
    \end{figure}

Assuming that \(\mathcal{A}_K \notin \Br X_{m^2,K} \), by exactness, we have that \(\partial_D(\mathcal A_K)\) is \emph{not} trivial in \(\H^1_{\text{\'et}}\left(D_K, \Q/\Z\right)\).
On the other hand, for the bottom row: since \(L_i\setminus P_i\) is isomorphic to \(\mathbb A^1_K\), then \(\Br(L_i \setminus P_i) \simeq \Br \mathbb A^1_K \simeq \Br K\) and furthermore \(\Br L_i \simeq \Br \P^1_K \simeq \Br K\). Thus the image of \(\mathcal A_K\) in \(\Br(L_i \setminus P_i)\) is constant.
By commutativity of the diagram, we get that $\partial_D(\mathcal A_K)$ lies in the kernel of $\ev=(\ev_1,\ev_2)$.

Let \(\psi:E \rightarrow D\) be a degree-\(2\) cover corresponding to \(\partial_D(\mathcal A_K)\).
For each \(i=1,2\), the fibre above each \(P_i\) is made of \(2\) distinct \(K\)-rational points.
Choosing a \(K\)-point \(Q\) in the fibre of \(P_1 = O_D\), makes \(\psi\) into a degree-\(2\) isogeny of elliptic curves.
With this choice, a point in the fibre of \(P_2\) must be of order \(4\): indeed, let \(Q_2\) be such a point, then clearly \(\psi(2Q_2)=2\psi(Q_2)=2P_2=O_D\), so \(2Q_2 \in \ker\psi\), which is a subgroup of \(E[2]\). Since every point in the fibre of \(P_1\) is \(K\)-rational, then \(\ker\psi = \psi^{-1}(P_1)\) is a subgroup of \(E(K)[2]\).
By Lemma~\ref{lem:auxiliary}, the elliptic curve \(E\) is one of the curves in Table~\ref{tab:isogeny_class} and \(E(K)[2] \simeq \Z/2\Z\), so \(\ker\psi = E(K)[2]\).
If \(Q_2\) were \(2\)-torsion, since it is \(K\)-rational, it would be in \(\ker\psi\), and so \(\psi(Q_2) = O_D \neq P_2\). Thus \(Q_2\) is a \(K\)-rational point of order \(4\).

The only curve in the isogeny class of \(D\) having \(K\)-rational points of order \(4\) is \(E_2\) in Table \ref{tab:isogeny_class}. So we have that the only possibility for \(\psi\) is to be the unique (degree-\(2\)) isogeny between \(E_2\) and \(D\), which is
\[(x,y) \mapsto \left(\dfrac{x^2 - 4x + 4}{4(x - 4)}, \dfrac{x^2y -8xy + 12y}{8(x-4)^2}\right).\]
The \(K\)-rational torsion points of \(E_2\) are \(\{(6,-8),(4,0),(6,8), O_{E_2}\}\) and so one can compute that the only non-trivial $2$-torsion point in the image of $\psi$ is the point $(2,0)$.
Having chosen \(P_1\) as \(O_D\), then \(P_2=(0,0)\) and we see that there are no \(4\)-torsion points sent to \(P_2\).

We conclude that \(\partial_D(\mathcal{A}_K)\) must be trivial  and we must have that \(\mathcal{A}_K \in \Br X_{m^2,K} \).
This is a contradiction, as we took \(\mathcal{A}\) such that \(\mathcal{A}_K \notin \Br X_{m^2,K} \).
\end{proof}

\section{Local invariants}\label{sec:local}
In this section we perform local invariants calculations that will be used for the proof of Theorem~\ref{thm:DP-BMO}. 
From now on $m>1$ is odd and squarefree. Write $\boldsymbol{\alpha}=(\alpha_1,\alpha_2)\in\Br U_{m^2}\times\Br U_{m^2}$. We take the convention that the local invariants lie in $\mu_2$, rather than $\Z/2\Z$.
For a quaternion algebra $(a,b)$ over $\Q_v$ the local invariant is the Hilbert symbol $\inv_v(a,b)=(a,b)_v\in\{\pm1\}$, and the condition $\sum_v\inv_v\mathcal A(P_v)=0$ defining the Brauer--Manin set reads $\prod_v\inv_v\mathcal A(P_v)=1$. Thus for $P_v\in\U_{m^2}(\Z_v)$ we have $\boldsymbol{\alpha}(P_v)\in\Br\Q_v\times\Br\Q_v$ and
\[
  \inv_v\boldsymbol{\alpha}(P_v)=\bigl(\inv_v\alpha_1(P_v),\,\inv_v\alpha_2(P_v)\bigr)\in\{\pm1\}^2 .
\]
We briefly recall how to compute the Hilbert symbols.
For a unit $e\in\Z_q^\times$ we write $\legendre{e}{q}$ for the Legendre symbol of its reduction modulo $q$. 
For an odd prime $q$, writing $a=q^{A}a_0$, $b=q^{B}b_0$ with $a_0,b_0$ units,
\[
  (a,b)_q=(-1)^{AB(q-1)/2}\legendre{a_0}{q}^{B}\legendre{b_0}{q}^{A},
\]
where $\legendre{\cdot}{q}$ denotes the Legendre symbol.
For $q=2$, with $\epsilon(c)=\tfrac{c-1}{2}\bmod 2$ and $\omega(c)=\tfrac{c^2-1}{8}\bmod 2$,
\[
  (a,b)_2=(-1)^{\epsilon(a_0)\epsilon(b_0)+A\omega(b_0)+B\omega(a_0)} .
\]

Firstly, we compute the invariants at the real place and at odd primes not dividing $m$.

Recall that $s=x^2-m$ and $t=w-y^2$, and define $r=x^2+m$, $u=w+y^2$, so that $tu=sr$ on the chart $z=1$.

\begin{lemma}\label{lem:easy}
Let $P_v\in\U_{m^2}(\Z_v)$.
\begin{enumerate}
\item If $v=p$ is a prime with $p\nmid 2m$, then $\inv_p\boldsymbol{\alpha}(P_p)=(1,1)$.
\item At the real place $\inv_\infty\boldsymbol{\alpha}(P_\infty)=(1,1)$.
\end{enumerate}
\end{lemma}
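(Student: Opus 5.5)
Both parts rest on a single device: an algebra $(f,c)$ can be rewritten using other representatives of the same class modulo norms and squares. We use the relation $tu=sr$ together with the fact that $\alpha_1,\alpha_2$ extend to all of $X_{m^2}$ (Corollary~\ref{cor:generators}). On $U_{m^2}$ we have $t+u=2w$, $u-t=2y^2$, $r-s=2m$, and $sr=tu$. From the conic bundle equation \eqref{eq:conic_bundle} we have $(s^2-t^2)x_0^2-2stx_1^2+m(s^2+t^2)x_2^2=0$. This shows, for instance, that $\frac{s}{s-t}$ times $s+t$ or $t$ differs by norms from $\Q(\sqrt{m})$ or $\Q(\sqrt{-m})$ on suitable open sets. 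So at each point we may replace the first entry of $\alpha_1$ by any of $\frac{s}{s-t}$, $\frac{r}{r-u}$, $\frac{s}{t}\cdot(\ldots)$, and similarly for $\alpha_2$. We choose whichever representative is defined and nonzero at $P_v$.

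For part (1), let $p\nmid 2m$ be a prime. Then $m$ and $-m$ are $p$-adic units, so by the Hilbert symbol formula $(a,\pm m)_p=\legendre{\pm m}{p}^{v_p(a)}$. The invariant is therefore trivial whenever the chosen first entry has even valuation. The plan is to show that at every $P_p\in\U_{m^2}(\Z_p)$ some representative is a $p$-adic unit.

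Since $r-s=2m$ is a unit, $p$ cannot divide both $s$ and $r$. Also, $p$ cannot divide both $t$ and $u$, because then $p\mid 2y^2$ and $p\mid 2w$, so $p\mid x^4-m^2=sr$, which contradicts the previous point. We then split into cases. If $p\nmid st(s-t)$, the representative $\frac{s}{s-t}$ is a unit. Otherwise we pass to $\frac{r}{r-u}$ or to another representative obtained from the $\P^1$-coordinate $[s:t]=[u:r]$. This works because the value of $\alpha_1$ only depends on the point of $\P^1$ up to the norm factors, and the class is unramified away from the singular fibres. It is cleaner to note that $\alpha_1$ is unramified over $\mathcal{X}$ at $p$: its residues live only along the degenerate fibres, which over $\F_p$ are split or irrelevant because $m$ is a unit. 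Then one can cite the standard fact that an element of $\Br X$ that extends to a smooth proper model over $\Z_p$ has trivial invariant at $\Z_p$-points. Concretely, the smooth model $x^4+y^4-m^2z^4=w^2$ has good reduction at $p\nmid 2m$, and $\Br$ of the smooth proper $\Z_p$-scheme maps to $\Br\Z_p=0$. This route is the easiest: $X_{m^2}$ is smooth over $\Z_p$ (the partial derivatives $4x^3,4y^3,4m^2z^3,2w$ do not vanish simultaneously mod $p$), and $\alpha_i\in\Br X_{m^2}$ extends over the model by purity, since the Brauer group of a regular model is the intersection of unramified classes. The only check needed is that the residue of $\alpha_i$ at the special fibre $\mathcal{X}_{\F_p}$ is trivial, which holds because $\pm m$ is a unit and the first entry is a rational function whose valuation along the special fibre is $0$. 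Thus $\alpha_i(P_p)\in\Br\Z_p=0$.

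For part (2), at the real place we need $(f,m)_\infty=1$, which is automatic because $m>0$. For $\alpha_2$ we need the real value of $\frac{t}{s+t}$ (or an equivalent representative) to be positive at real points, since $-m<0$. We use the representative $\frac{t}{s+t}\sim\frac{u}{u+r}$, equivalently $t(s+t)$ up to squares. Write $t(s+t)=ts+t^2$. On real points, $t$ and $u$ have the same sign as $sr$ because $tu=sr$. Since $u-t=2y^2\ge0$ and $t+u=2w$, we split into cases by the sign of $s$. If $s\ge0$ then $r>0$, so $tu\ge0$, and one checks the product is nonnegative. If $s<0$ then $t,u$ have opposite signs, forcing $t\le0\le u$; then we switch to the representative $\frac{u}{u+r}$, where $u\ge0$ and $u+r>0$. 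The remaining degenerate cases where a representative vanishes are handled by continuity: the invariant is locally constant on $\U(\R)$, and we evaluate at a nearby generic point. The main obstacle I expect is bookkeeping the sign cases and justifying the change of representatives for $\alpha_2$ at real points; I would verify it via the identity $\frac{t}{s+t}\cdot\frac{u+r}{u}\in N(\C^\times)$ derived from the conic equation.
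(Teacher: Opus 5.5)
\paragraph{Part (1).} In its final form (smooth proper model over $\Z_p$, trivial residue along the special fibre because $\pm m$ are units and $s$, $s-t$, $t$, $s+t$ do not vanish identically mod $p$, then purity and $\Br\Z_p=0$), your argument is the paper's. You can drop the preliminary juggling of representatives. It also contains a non sequitur: $p\mid t$ and $p\mid u$ only give $p\mid sr$, which does not contradict $p\nmid\gcd(s,r)$.

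\paragraph{Part (2).} Here there is a genuine gap.

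First, the change of representative $\frac{t}{s+t}\sim\frac{u}{u+r}$ is false. Using $tu=sr$ one finds $\frac{t}{s+t}\cdot\frac{u+r}{u}=\frac{r}{u}=\frac{t}{s}$, and this takes negative values on $U_{m^2}(\R)$, so it is not a norm from $\mathbb{C}$. Concretely, take $m=3$ and the real point $(x,y,w)=(2,0,-\sqrt7)$. There $s=1$, $t=u=-\sqrt7$ and $r=7$, so $t/(s+t)>0$ but $u/(u+r)<0$. Hence $(u/(u+r),-m)$ is a different class from $\alpha_2$. In your case $s<0$ the switch does no harm only because it is unnecessary: $s<0$ and $t<0$ already give $s+t<0$.

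Second, and more seriously, the case $s\ge 0$, $t\le u<0$ is where something must actually be proved, and you dismiss it with ``one checks''. The same point shows this case occurs. There you need $s+t<0$, i.e.\ $|w|>x^2-y^2-m$, which is an honest inequality, not bookkeeping.

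The missing idea is the exact identity $t(u+r)=tu+tr=(s+t)r$, so $t/(s+t)=r/(u+r)$ as functions on $U_{m^2}$. Here $r=x^2+m>0$, and $u+r=w+x^2+y^2+m>0$ because $w^2=x^4+y^4-m^2<(x^2+y^2+m)^2$. This representative is regular and positive on all of $U_{m^2}(\R)$. It gives $\inv_\infty\alpha_2=1$ with no case split and no continuity argument.
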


\begin{proof}
(1) Let $\mathcal X/\mathbb Z_p$ be the smooth proper model for $X_{m^2}$ defined by the same weighted homogeneous equation. At the generic point of the special fibre, both entries of each quaternion representative in \eqref{eq:brauer-generators} are units, since $p \nmid m$ and none of $s,t,s+t,s-t$ vanishes identically on the special fibre.
So the vertical residues of $\alpha_1,\alpha_2$ are trivial; by purity for the regular scheme $\mathcal X$ both classes extend to $\Br\mathcal X$, and their evaluation at a $\mathbb Z_p$-point lies in $\Br\mathbb Z_p=0$.

(2) 
For $\alpha_1=(s/(s-t),m)$, as we are assuming $m>1$, the second entry is positive, so $\inv_\infty\alpha_1(P_\infty)=1$. For $\alpha_2=(t/(s+t),-m)$ the second entry is negative, and $t(u+r)=(s+t)r$ gives $t/(s+t)=r/(u+r)$ on $U_{m^2}$.
Since $(x^2+y^2+m)^2-(x^4+y^4-m^2)=2x^2y^2+2mx^2+2my^2+2m^2>0$, we have $r=x^2+m>0$ and $u+r=(x^2+y^2+m)+w>0$.
Hence $t/(s+t)>0$ on $U_{m^2}(\R)$ and $\inv_\infty\alpha_2(P_\infty)=1$.
\end{proof}

By Lemma~\ref{lem:easy}, only the place $2$ and the primes dividing $m$ can contribute. We study separately these cases in the following subsections.

As we will show in Section~\ref{sec:proof}, when the obstruction in Theorem~\ref{thm:DP-BMO} exists, it is given by the element $\beta=\alpha_1+\alpha_2$, which is such that $\inv_v\beta(P_v)$ is the product of the two coordinates of $\inv_v\boldsymbol{\alpha}(P_v)$.

\subsection{The two-adic place}\label{sec:twoadic}
In the following two lemmas, we compute the values of $\inv_2\boldsymbol{\alpha}$, depending on the congruence type of $m$ modulo $8$. As a result, in both cases, there exists a point $P_2\in\U_{m^2}(\Z_2)$ such that $\inv_2\beta(P_2) = 1$.

\begin{lemma}\label{lem:two35}
Assume $m\equiv 3$ or $5\pmod 8$. Then $\inv_2\beta(P_2)=1$ for every $P_2\in\U_{m^2}(\Z_2)$. Moreover, $x,y,w$ are odd, $w\equiv 3$ or $5\pmod 8$, and we have the following possibilities of $\inv_2\boldsymbol{\alpha}$.
\[
  \renewcommand{\arraystretch}{1.15}
  \begin{array}{c|cc}
     & w\equiv 3 & w\equiv 5\\\hline
    m\equiv 3 & (1,1) & (-1,-1)\\
    m\equiv 5 & (-1,-1) & (1,1)
  \end{array}
\]
\end{lemma}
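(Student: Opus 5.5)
The plan is a direct $2$-adic computation: first pin down the residues of $x,y,w$, then evaluate the Hilbert symbols using whichever representative of each class keeps the entries of small valuation.

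\textbf{Step 1: residues of $x,y,w$.} Since $m\equiv 3,5\pmod 8$, we have $m^2\equiv 9\pmod{16}$. Fourth powers of $2$-adic integers are $\equiv 0,1\pmod{16}$, and squares are $\equiv 0,1,4,9\pmod{16}$. The equation $x^4+y^4-w^2\equiv 9\pmod{16}$ then forces $x^4+y^4\equiv 2$ and $w^2\equiv 9\pmod{16}$: the other choices $x^4+y^4\equiv 0,1$ would need $w^2\equiv 7,8\pmod{16}$, which are not squares. Hence $x,y$ are odd and $w\equiv 3,5\pmod 8$, as claimed. In particular $x^2\equiv y^2\equiv 1\pmod 8$, which gives the residues modulo $8$ of
\[ s=x^2-m,\qquad r=x^2+m,\qquad t=w-y^2,\qquad u=w+y^2 \]
in each of the four cases $(m\bmod 8,\,w\bmod 8)$. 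For example, if $m\equiv 3$ and $w\equiv 3$, then $s\equiv -2$, $r\equiv 4$, $t\equiv 2$ and $u\equiv 4\pmod 8$.

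\textbf{Step 2: alternative representatives.} The relation $tu=sr$ gives
\[ s(u-r)=su-sr=su-tu=u(s-t), \]
so $s/(s-t)=u/(u-r)$ on $U_{m^2}$. Similarly, as in Lemma~\ref{lem:easy}, $t/(s+t)=r/(u+r)$. We may also multiply the first entry of $\alpha_1$ by norms from $\Q_2(\sqrt m)$, and that of $\alpha_2$ by norms from $\Q_2(\sqrt{-m})$. In each case we pick a representative whose numerator and denominator have valuation we can control: typically valuation $1$ for $s,t$, and controlled valuation for $s\pm t$ or $u\pm r$. The $2$-adic formula
\[ (a,b)_2=(-1)^{\epsilon(a_0)\epsilon(b_0)+A\omega(b_0)+B\omega(a_0)} \]
with $b=\pm m$ a unit reduces to
\[ (a,\pm m)_2=(-1)^{\epsilon(a_0)\epsilon(\pm m)+A\,\omega(m)}. \]
For $m\equiv 3,5\pmod 8$ we have $\omega(m)=1$. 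So each symbol depends only on the parity of $v_2(a)$ and on $a_0\bmod 4$.

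\textbf{Step 3: casework and the main obstacle.} For each of the four cases we compute $v_2$ and the unit part modulo $4$ of the chosen numerator and denominator. Only their quotient matters, so we need the valuations of $s\pm t$ or $u\pm r$ together with the residue of the unit part modulo $4$. The main obstacle is that quantities such as $s-t=x^2-m-w+y^2$ can have large and variable $2$-adic valuation. When that happens, we switch to the representative $u/(u-r)$, or use $s+t$ versus $s-t$ combined with $(s-t)+(s+t)=2s$, to guarantee one of them has valuation exactly $1$ or $2$. If needed, we lift the congruence in Step 1 modulo $32$ to fix the residue classes completely.

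\textbf{Step 4: conclusion.} Carrying this out should yield the table in the statement. In every case the two coordinates of $\inv_2\boldsymbol{\alpha}(P_2)$ coincide, so their product, which is $\inv_2\beta(P_2)$, equals $1$.
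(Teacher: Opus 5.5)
You correctly establish Step~1 and the reduction of the symbol to $(-1)^{\epsilon(a_0)\epsilon(\pm m)+v_2(a)}$. These would settle the two cases with $v_2(s)\neq v_2(t)$, namely $(m,w)\equiv(3,5),(5,3)\pmod 8$, at once, since then $v_2(s\pm t)=1$. But the proposal stops where the lemma's content begins. Step~4 only asserts that the casework ``should yield'' the table, so no entry is actually established. In particular, the agreement of the two coordinates, which is what gives $\inv_2\beta=1$, is not proved.

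The real difficulty is in the cases $(m,w)\equiv(3,3)$ and $(5,5)\pmod 8$, where $v_2(s)=v_2(t)$. There the residues from Step~1 do not determine what the symbols need: $v_2(s+t)$ when $m\equiv w\equiv 3$, and $v_2(s-t)$ and the unit part of $t/(s+t)$ modulo $4$ when $m\equiv w\equiv 5$. Switching to $u/(u-r)$ or $r/(u+r)$ does not help at this level of information, because $u\equiv r\pmod 8$ in exactly these two cases.

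What is missing is a congruence linking the odd parts $a=s/2^{v_2(s)}$ and $c=t/2^{v_2(t)}$. Only the equation at higher $2$-adic precision can supply it. The paper extracts it from $tu=sr$. Write $u=t+2y^2$ and $r=s+2m$, and cancel the common power of $2$. This gives $c(c+y^2)=a(a+m)$, resp.\ $c(2c+y^2)=a(2a+m)$, and both reduce modulo $8$ to $c\equiv am\pmod 8$. From this:
\begin{itemize}
\item when $m\equiv w\equiv 3$: $v_2(s-t)=2$, $v_2(s+t)=3$, and the unit part of $s/(s-t)$ is $\equiv 3\pmod 4$;
\item when $m\equiv w\equiv 5$: $v_2(s-t)=4$, $v_2(s+t)=3$, and the unit part of $t/(s+t)$ is $\equiv 3\pmod 4$.
\end{itemize}
Hence both cases give $(1,1)$.

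Your fallback ``if needed, lift modulo $32$'' points the right way, but you never carry it out, and as stated it is not enough. For $m\equiv w\equiv 3$ the equation modulo $32$ is indeed equivalent to $c\equiv am\pmod 8$. For $m\equiv w\equiv 5$, however, $c\equiv am\pmod 8$ amounts to the equation modulo $64$. Modulo $32$ you only get $v_2(s-t)\geq 4$, and the unit part of $t/(s+t)$ modulo $4$ is left undetermined.

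Concretely, take $m=5$ and $(x,y,w)=(1,1,29)$. This satisfies the congruence modulo $32$ but has $s-t=-32$ and $t/(s+t)=7/6$, with unit part $\equiv 1\pmod 4$. By contrast, the genuine $2$-adic points with $x=y=1$ and $w\equiv 5\pmod 8$ have $w\equiv 13\pmod{32}$, so $v_2(s-t)=4$ and the unit part is $\equiv 3\pmod 4$. In that case you would have to combine the modulo-$32$ information with the representatives $u/(u-r)$ and $r/(u+r)$, where it does suffice, giving $v_2(u-r)=3$ and $(u+r)/4\equiv 1\pmod 4$. That combination is exactly the computation the proposal leaves undone.

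Finally, you misdiagnose the ``main obstacle'': $v_2(s\pm t)$ is not large or variable. Once the link between $a$ and $c$ is known, these valuations are constant on each residue class of $(m,w)$ modulo $8$. That constancy is exactly why $\inv_2\boldsymbol{\alpha}$ depends only on $(m\bmod 8,\,w\bmod 8)$.
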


\begin{proof}
From $m\equiv\pm3 \pmod 8$ we get $m^2\equiv9\pmod{16}$. Fourth powers modulo $16$ are $0$ for even bases and $1$ for odd bases, since an odd square is $1$ or $9$ modulo $16$ and $1^2\equiv9^2\equiv1\pmod{16}$. If $x,y$ were not both odd, then $x^4+y^4\equiv0$ or $1$ modulo $16$, hence $w^2\equiv x^4+y^4-m^2\equiv7$ or $8\pmod{16}$; but squares modulo $16$ lie in $\{0,1,4,9\}$, a contradiction. So $x,y$ are both odd, $w^2\equiv1+1-9\equiv9\pmod{16}$, and among odd residues modulo $8$ only $w\equiv \pm3$ satisfy $w^2\equiv9\pmod{16}$.
Recall that $\epsilon(c)=0$ or $1$ according as $c\equiv1$ or $3\pmod 4$, and $\omega(c)=0$ or $1$ according as $c\equiv\pm1$ or $\pm3\pmod 8$.

Write $s=2^{v_2(s)}a$ and $t=2^{v_2(t)}c$ with $a,c$ odd. Let $A_1=s/(s-t)$ and $A_2=t/(s+t)$, and let $a_i$ be the unit part of $A_i$.
The table below lists, for each pair of residues of $m$ and $w$ modulo $8$, the valuations of $s$, $t$, $s-t$, $s+t$ and the values $\epsilon(a_i)$. A dash indicates that the second entry of the symbol is $\equiv1\pmod 4$, so that $\epsilon(a_i)$ is not required for our computation.
\begin{center}
\renewcommand{\arraystretch}{1.15}
\setlength{\tabcolsep}{5pt}
\begin{tabular}{cc@{\quad}cccc@{\quad}cc@{\quad}c}
\toprule
$m$ & $w$ & $v_2(s)$ & $v_2(t)$ & $v_2(s-t)$ & $v_2(s+t)$ & $\epsilon(a_1)$ & $\epsilon(a_2)$ & $\inv_2\boldsymbol{\alpha}(P_2)$\\
\midrule
$3$ & $5$ & $1$ & $2$ & $1$ & $1$ & $1$ & -- & $(-1,-1)$\\
$3$ & $3$ & $1$ & $1$ & $2$ & $3$ & $1$ & -- & $(1,1)$\\
$5$ & $3$ & $2$ & $1$ & $1$ & $1$ & -- & $1$ & $(-1,-1)$\\
$5$ & $5$ & $2$ & $2$ & $4$ & $3$ & -- & $1$ & $(1,1)$\\
\bottomrule
\end{tabular}
\end{center}
The valuations of $s$ and $t$ follow from $x^2\equiv y^2\equiv1\pmod 8$. In the two rows with $v_2(s)=v_2(t)$, the relation $tu=sr$ reduced modulo $8$ forces $(c,a)\equiv(1,3)$ or $(5,7)\pmod 8$ for $m\equiv3$ and $(c,a)\equiv(1,5),(3,7),(5,1)$ or $(7,3)\pmod 8$ for $m\equiv5$, which determines the valuations of $s\mp t$ and the values $\epsilon(a_i)$. 
In the other two rows $s\mp t$ is twice an odd number. Since $\omega(\pm m)=1$, $\epsilon(m)=1$ for $m\equiv3$ and $\epsilon(-m)=1$ for $m\equiv5$, the two-adic symbol formula gives the last column. In all four cases the two coordinates of $\inv_2\boldsymbol{\alpha}(P_2)$ agree, so $\inv_2\beta(P_2)=\inv_2\alpha_1(P_2)\inv_2\alpha_2(P_2)=1$.
\end{proof}

\begin{lemma}\label{lem:two17}
If $m\equiv 1$ or $7\pmod 8$, there exists $P_2\in\U_{m^2}(\Z_2)$ with $\inv_2\boldsymbol{\alpha}(P_2)=(1,1)$.
\end{lemma}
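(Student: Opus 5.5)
The plan is to exhibit, for each of the two residue classes of $m$ modulo $8$, one explicit $2$-adic point and evaluate the Hilbert symbols there, exactly as in the proof of Lemma~\ref{lem:two35}. Since only existence is claimed, no case analysis over all of $\U_{m^2}(\Z_2)$ is needed. I would produce the point by choosing $x,y\in\Z_2$ and a target residue for $w$, and then solve $w^2=x^4+y^4-m^2$ by Hensel's lemma. Concretely, I would choose $x,y$ so that $x^4+y^4-m^2$ is a nonzero $2$-adic square. By Hensel, this amounts to checking that its unit part is $\equiv 1\pmod 8$ and that its valuation is even.

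Since $m\equiv\pm1\pmod 8$, we have $m^2\equiv1\pmod{16}$. So $x$ odd and $y$ even gives $x^4+y^4-m^2\equiv y^4\pmod{16}$, and there is some freedom. My first attempt is $y=0$ or $y=2$ with $x$ odd, adjusting $x$ modulo a small power of $2$ so that $x^4+y^4-m^2$ is a square. If that fails, I would instead try $x,y$ both even, where the right-hand side is $\equiv -m^2+(\text{small})$ and $-1$ must be compensated. In either case I also want to keep $s=x^2-m$, $t=w-y^2$, $s\pm t$ nonzero, so that $\alpha_1,\alpha_2$ are given by the displayed quaternion algebras at the point. If a chosen point makes one of them vanish, I would replace it by the alternative representative using $tu=sr$, namely $t/(s+t)=r/(u+r)$, or perturb slightly. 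Local constancy of $\inv_2$ on $\U_{m^2}(\Z_2)$ justifies such a perturbation.

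Once a point is fixed, the computation follows the pattern of the table in Lemma~\ref{lem:two35}. Record $v_2$ of $s,t,s-t,s+t$ and the residues modulo $8$ of their unit parts, then apply the formula for $(a,b)_2$ with $b=\pm m$. For $m\equiv 1\pmod 8$ we have $\epsilon(m)=\omega(m)=0$, so $(A_1,m)_2=1$ automatically. For $\alpha_2$, since $\epsilon(-m)=1$ and $\omega(-m)=0$, we need $\epsilon(a_2)=0$, i.e.\ the unit part of $t/(s+t)$ should be $\equiv1\pmod4$. For $m\equiv7\pmod8$ the roles swap: $-m\equiv1\pmod 8$ makes $\alpha_2$ trivial, and for $\alpha_1$ we need the unit part of $s/(s-t)$ to be $\equiv1\pmod 4$. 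So in each case only one congruence condition must be arranged, which I would build into the choice of $x$, $y$ and the sign of $w$. Flipping $w\mapsto -w$ changes $t$ and is a convenient free parameter.

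The main obstacle is the simultaneous bookkeeping: the point must be a genuine $\Z_2$-point that Hensel's lemma lifts, with the relevant quantities nonvanishing, and with the single residue condition above holding. I expect that flipping the sign of $w$, or changing $x$ by a multiple of $4$, toggles the relevant $\epsilon(a_i)$. Then at least one of two candidate points works, and this is the step I would verify first by a short computation modulo $16$ or $32$.
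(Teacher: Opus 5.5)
There is a genuine gap. Your reduction is correct: for $m\equiv1\pmod 8$ only the class modulo $4$ of the unit part of $A_2=t/(s+t)$ matters, and for $m\equiv7\pmod 8$ only that of $A_1=s/(s-t)$. But the lemma \emph{is} the existence of a point where this single congruence holds, and you never produce one. The decisive computation is deferred, and the mechanism you expect to guarantee it is wrong.

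Take the family you favour, $x$ odd and $y=0$, so that $t=w$. For $m\equiv1$, $v_2(s)$ is odd and $\geq3$ with $v_2(t)=(v_2(s)+1)/2$, so $v_2(s/t)\geq1$. For $m\equiv7$, $v_2(s)=1$ and $v_2(t)\geq2$, so $v_2(t/s)\geq1$. The relevant quantity is therefore $(1+s/t)^{-1}$, respectively $(1-t/s)^{-1}$, and replacing $w$ by $-w$ does \emph{not} change its class modulo $4$. The condition holds exactly when $v_2(w)\geq3$, i.e.\ $v_2(x^4-m^2)\geq6$. Whether your adjustments of $x$ achieve this is precisely the step left unverified. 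Your fallback with $x,y$ both even cannot work at all, since then $x^4+y^4-m^2\equiv-1\pmod{16}$ is not a $2$-adic square.

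The missing idea is to use the regime you skipped, $x,y$ both odd, where Hensel's lemma applies at once. With $x=y=1$, $2-m^2\equiv1\pmod{16}$ is a unit square, and choosing $w\equiv1\pmod 8$ gives $v_2(t)\geq3$. For $m\equiv7$, $v_2(s)=1$, so $A_1=(1-t/s)^{-1}\in1+4\Z_2$. For $m\equiv1$, use the identity $t/s=r/u=(m+1)/(w+1)$ together with $w^2=2-m^2$, which forces $w\equiv m\pmod{16}$. This gives $\theta=t/s\equiv1\pmod 8$, so $A_2=\theta/(1+\theta)$ has valuation $-1$ and unit part $\equiv1\pmod 4$, and $\omega(-m)=0$ kills the remaining term of the Hilbert symbol. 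This is the paper's argument.

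If you prefer to stay with $y=0$, take $x=17\lambda$ with $\lambda^4=m^2$; such $\lambda$ exists since $m^2\in1+16\Z_2$. Then $x^4-m^2=m^2\cdot2^6\cdot1305$ with $1305\equiv1\pmod 8$ is a square with $v_2(w)=3$, and the condition holds.
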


\begin{proof}
Take $x=y=1$. Since $m^2\equiv1\pmod{16}$, we have $2-m^2\equiv1\pmod{16}$, so $2-m^2$ is a square in $\Z_2$. Choose the square root $w$ with $w\equiv1\pmod 8$. Then $P_2=(1,1,w)\in\U_{m^2}(\Z_2)$, with $s=1-m$, $t=w-1$ and $v_2(t)\geq3$.

Suppose that $m\equiv7\pmod 8$. Then $v_2(s)=1$, so $A_1=(1-t/s)^{-1}\in1+4\Z_2$ and $(A_1,m)_2=1$. Moreover, $-m\equiv1\pmod 8$ is a square in $\Z_2$, hence $(A_2,-m)_2=1$.

Suppose that $m\equiv1\pmod 8$. Then $m$ is a square in $\Z_2$, so $(A_1,m)_2=1$. Writing $m=1+8a$ and $w=1+8b$, the relation $w^2=2-m^2$ gives $a\equiv b\pmod 2$. Put $\theta=t/s=(m+1)/(w+1)$. Then $\theta\equiv1\pmod 8$, so $A_2=\theta/(1+\theta)$ has $v_2(A_2)=-1$ and unit part $\equiv1\pmod 4$. Since $\omega(-m)=0$, we get $(A_2,-m)_2=1$.

In both cases $\inv_2\boldsymbol{\alpha}(P_2)=(1,1)$.
\end{proof}

\subsection{Odd primes dividing \texorpdfstring{$m$}{m}}\label{sec:oddq}
Contrary to the $2$--adic case, for the odd primes, the value of $\inv_q\beta$ can be $-1$, depending on the congruence of $q$ modulo $8$.
The following lemma shows that, in particular, when $q \equiv 1$ or $q \equiv 7 \pmod 8$, then $\inv_q\beta$ is $1$ at every $\Z_q$--point, while for $q \equiv 3 \pmod 8$ we have $\inv_q\beta(P_q)=-1$ for every $P_q\in\U_{m^2}(\Z_q)$.

For $q\mid m$ write
\[
  E_q:=\inv_q\boldsymbol{\alpha}\bigl(\U_{m^2}(\Z_q)\bigr)\subseteq\{\pm1\}^2
\]
for the image of $\inv_q\boldsymbol{\alpha}$.

\begin{lemma}\label{lem:oddq}
Let $q\mid m$ be an odd prime.
\begin{enumerate}
\item If $q\equiv 1\pmod 8$, then $(1,1)\in E_q\subseteq\{(1,1),(-1,-1)\}$.
\item If $q\equiv 3\pmod 8$, then $E_q=\{(-1,1),(1,-1)\}$.
\item If $q\equiv 5\pmod 8$, then $E_q=\{\pm1\}^2$.
\item If $q\equiv 7\pmod 8$, then $(1,1)\in E_q\subseteq\{(1,1),(-1,-1)\}$.
\end{enumerate}
\end{lemma}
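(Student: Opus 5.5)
The plan is to compute the symbols $\inv_q\alpha_1=(A_1,m)_q$ and $\inv_q\alpha_2=(A_2,-m)_q$, where $A_1=s/(s-t)$ and $A_2=t/(s+t)$, directly from the tame formula, using that $v_q(m)=1$. If $A_i$ is a $q$-adic unit, then $(A_1,m)_q=\legendre{A_1}{q}$ and $(A_2,-m)_q=\legendre{A_2}{q}$. If $A_i$ has odd valuation, an extra factor $\legendre{-1}{q}$ or $\legendre{m/q}{q}$ appears. Since $\alpha_i$ is only defined up to $\Br\Q$ and up to changing the representative on $U_{m^2}$, I will freely use the identity $tu=sr$ and the alternative representative $t/(s+t)=r/(u+r)$ from Lemma~\ref{lem:easy}. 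This lets me choose, at each point, a representative whose entries are units at $q$ whenever possible.

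\emph{Step 1: reduce modulo $q$.} A point of $\U_{m^2}(\Z_q)$ reduces to $x^4+y^4\equiv w^2 \pmod q$, with $s\equiv r\equiv x^2$. I will split into cases according to whether $q$ divides $x$, $y$, $w$, $t$ or $u$. In the generic case, where $x,y,w$ are units and none of $s,t,s\pm t$ is divisible by $q$, both symbols are Legendre symbols of units.

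\emph{Step 2: compute the product $\inv_q\beta$.} The key input is the conic bundle equation \eqref{eq:conic_bundle} read modulo $q$:
\[
(s^2-t^2)x_0^2\equiv 2st\,x_1^2 \pmod q.
\]
In the generic case this gives
\[
\legendre{A_1A_2}{q}=\legendre{st/(s^2-t^2)}{q}=\legendre{2}{q}.
\]
Together with the factor $\legendre{-1}{q}$ coming from $-m$ in the degenerate cases, this should give $\inv_q\beta=\legendre{2}{q}$ identically when $q\equiv 3\pmod 8$, and $\inv_q\beta=1$ when $q\equiv\pm1\pmod 8$. This proves the inclusions $E_q\subseteq\{(1,1),(-1,-1)\}$ for $q\equiv1,7$ and $E_q\subseteq\{(1,-1),(-1,1)\}$ for $q\equiv 3$. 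The sign pattern across residues is consistent with $\legendre{2}{q}\legendre{-1}{q}^{\delta}$, with $\delta$ depending on which fibre the point reduces near. The degenerate cases, where the reduction lies on a singular fibre (for instance $t\equiv0$ or $s\equiv\pm t$), will be handled by switching to the representative $r/(u+r)$ or its $\alpha_1$ analogue, which again has unit entries. Alternatively, one can compute the valuations explicitly as in the proof of Lemma~\ref{lem:two35}.

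\emph{Step 3: exhibit points realizing the required values.} I will produce smooth points on the reduction $x^4+y^4=w^2$ over $\F_q$ and lift them by Hensel's lemma. For $q\equiv1,7\pmod 8$ I need a point with $\legendre{A_1}{q}=1$; a natural candidate is $y=0$, $w=x^2$, or a point with $t$ a square. For $q\equiv 3\pmod 8$ I need both $(1,-1)$ and $(-1,1)$. For $q\equiv 5\pmod 8$ I need all four values; there $\inv_q\beta$ is not constant, because $\legendre{-1}{q}=1\neq\legendre{2}{q}$ makes the generic and degenerate cases differ. I expect to count points on the genus-one curve (or the conic fibres) over $\F_q$ via Hasse--Weil, showing that both values of $\legendre{A_1}{q}$ occur for $q$ large, and to check small $q$ by hand. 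Using the fibration $\pi$ is cleaner: on a smooth fibre over a given $[s:t]\in\P^1(\F_q)$ the conic has points, so I can prescribe the class of $s/(s-t)$ by choosing $[s:t]$.

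The main obstacle is the bookkeeping in the degenerate reductions: showing that these reductions do not produce extra values (which would break the inclusions for $q\equiv1,3,7$), and that for $q\equiv5$ they do produce the missing values.
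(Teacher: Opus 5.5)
Your Step 2 is correct and matches the paper's Case B; you use the conic-bundle identity $(s^2-t^2)x^2-2sty^2+m(s^2+t^2)=0$ where the paper uses $tu=sr$. The gap lies in what you defer to ``bookkeeping''. The degenerate points are exactly those with $q\mid xy$: if $q\nmid xy$, then $s,t,s\pm t$ are automatically units. Your device for these points does not work. For instance, if $q\mid x$ and $w\equiv y^2\pmod q$, then $v_q(s)=v_q(r)=1$, $v_q(t)=2$ and $u$ is a unit, so $r/(u+r)$ still has valuation $1$. The second entries $\pm m$ are never units either. You have to apply the tame formula with odd valuation. There the factors $(-1)^{(q-1)/2}$, $\legendre{-1}{q}$ and $\legendre{M}{q}^2$ cancel, giving $(1,\legendre{2}{q})$, resp.\ $(\legendre{2}{q},1)$ when $w\equiv -y^2$, and the same values when $q\mid y$. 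So $\inv_q\beta=\legendre{2}{q}$ at \emph{every} point with $(x,y)\not\equiv(0,0)\pmod q$. Your heuristic $\legendre{2}{q}\legendre{-1}{q}^{\delta}$ does not describe what actually happens.

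The missing idea is the case $q\mid x$ and $q\mid y$, which behaves differently. Here $w=qW$ with $W^2\equiv-M^2\pmod q$, so this case occurs only for $q\equiv1\pmod 4$. Moreover $A_1\equiv A_2\equiv(1\pm\iota)^{-1}\pmod q$ with $\iota^2=-1$, so both invariants coincide and $\inv_q\beta=1$. You must check this case to get the inclusion for $q\equiv1\pmod 8$. It is also the \emph{only} source of $(1,1)$ and $(-1,-1)$ when $q\equiv5\pmod 8$.

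Your Step 3 cannot reach these points. Hensel-lifting smooth $\F_q$-points of $x^4+y^4=w^2$ only produces points with $(x,y)\not\equiv(0,0)$, and for $q\equiv5$ all of these have $\inv_q\beta=\legendre{2}{q}=-1$. No amount of Hasse--Weil counting yields the two missing values. The required points reduce to the singular point $x=y=w=0$ and must be written down explicitly, e.g.\ $(0,0,\pm m\iota)$. The two signs give $\legendre{1+\iota}{q}$ and $\legendre{1-\iota}{q}$, whose product is $\legendre{2}{q}=-1$.

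Your fibration remark also fails. Since $q\mid m$, every fibre reduces to the line pair $(s^2-t^2)x_0^2=2st\,x_1^2$. This has smooth $\F_q$-points only when $2st/(s^2-t^2)$ is a square, so you cannot prescribe $[s:t]$ freely.

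For $q\equiv1,3,7\pmod 8$ the existence part is easy to repair. Compute the invariants at your candidate $y=0$, $w\equiv\pm x^2\pmod q$, or at the paper's $(0,1,w)$ with $w\equiv\pm1$. These give $(1,\legendre{2}{q})$ and $(\legendre{2}{q},1)$.
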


\begin{proof}
Write $m=qM$ with $M\in\Z_q^\times$, and let $A_1=s/(s-t)$, $A_2=t/(s+t)$.
To compute the image of $\U_{m^2}(\Z_q)$, we distinguish four cases, according to which of the coordinates $x$ and $y$ of $P_q = (x,y,w)$ are divisible by $q$.

\underline{Case A: $q\mid x$, $q\nmid y$.} Then $w^2\equiv y^4\pmod{q^2}$, so $w\equiv\pm y^2\pmod q$, and $v_q(s)=v_q(r)=1$ with $s/q\equiv-M$, $r/q\equiv M\pmod q$. In the case $w\equiv y^2$, $u$ is a unit, so $tu=sr$ gives $v_q(t)=2$ with $t/q^2\equiv-M^2/(2y^2)\pmod q$. Hence $A_1\equiv1\pmod q$, while $v_q(A_2)=1$ with unit part $a_0\equiv M/(2y^2)$, and the Hilbert symbol formula gives
\[
  (A_2,-m)_q=(-1)^{\frac{q-1}{2}}\legendre{a_0}{q}\legendre{-M}{q}
  =(-1)^{\frac{q-1}{2}}\legendre{-1}{q}\legendre{2}{q}=\legendre{2}{q}.
\]
If $w\equiv-y^2$, then $A_1 \equiv 1 \pmod q$ and the unit part of $A_1$ is congruent to $-\dfrac{M}{2y^2}$. In this case we get $(A_1,m)_q = \legendre{2}{q}$.

Thus $\inv_q\boldsymbol{\alpha}(P_q)=\left(1,\legendre{2}{q}\right)$ if $w \equiv y^2 \pmod q$ and $\inv_q\boldsymbol{\alpha}(P_q)=\left(\legendre{2}{q},1\right)$ if $w \equiv -y^2 \pmod q$.

Choosing the points $P_q = (0,1,w)$, with $w$ such that $w^2=1-m^2$, $w\equiv\pm1\pmod q$, both values are attained.

\underline{Case A': $q\mid y$, $q\nmid x$.} Now $s,t,u,r$ are units and, by $(w-x^2)(w+x^2)=y^4-m^2$, one of $w\pm x^2$ has valuation $2$ and exactly one between $s+t$ and $s-t$ has valuation $1$, while the other is a unit. The same computation as above gives the same two values  $\left(1,\legendre{2}{q}\right)$, $\left(\legendre{2}{q},1\right)$ for $\inv_q\boldsymbol{\alpha}(P_q)$, attained at the points $P_q =(1,0,w)$, with the same conditions on $w$ as above.

These two cases show that, from the property of $\legendre{2}{q}$, if $q\equiv\pm1 \pmod 8$ then $(1,1) \in E_q$, and if $q\equiv\pm3\pmod 8$, then both $(1,-1)$ and $(-1,1)$ are contained in $E_q$.
To complete the proof of our statements, we compute the values of $\inv_q\beta(P_q)$ in the remaining two cases.

\underline{Case B: $q\nmid xy$.} Then $s,t,s\pm t$ are units. So, by using $s\equiv x^2$ and $x^4-t^2\equiv2y^2t\pmod q$, we have that 
$\inv_q\alpha_i(P_q)=\legendre{A_i}{q}$ and
\[
  \inv_q\beta(P_q)=\legendre{A_1A_2}{q},\qquad A_1A_2=\frac{st}{s^2-t^2}\equiv\frac{x^2}{2y^2}\pmod q.
\]
Hence $\inv_q\beta(P_q)=\legendre{2}{q}$.

\underline{Case C: $q\mid x$, $q\mid y$.} In this case, $w=qW$ with $W\in\Z_q^\times$ and $W^2\equiv-M^2\pmod q$, so $-1$ is a square modulo $q$ and $q\equiv1\pmod 4$. Fix $\iota\in\Z_q$ with $\iota^2=-1$ and $W\equiv \pm \iota M\pmod q$. Then $v_q(s)=v_q(t)=1$ with $s/q\equiv-M$, $t/q\equiv \pm\iota M$, and $A_1\equiv A_2\equiv(1\pm\iota)^{-1}\pmod q$, so both coordinates of $\inv_q\boldsymbol{\alpha}(P_q)$ equal $\legendre{1\pm\iota}{q}$. In particular, this implies that $\inv_q\beta(P_q) = 1$.

To conclude the proof, notice that case C occurs only when $q \equiv 1$ or $q \equiv 5 \pmod8$. 
So if $q \equiv 3 \pmod 8$, then $\inv_q\beta(P_q) = -1$ for all $P_q$ implies that $E_q=\{(-1,1),(1,-1)\}$, and if $q \equiv 7 \pmod 8$, then $E_q\subseteq\{(1,1),(-1,-1)\}$.
For $q \equiv 1 \pmod 8$, we have that $E_q\subseteq\{(1,1),(-1,-1)\}$.
On the other hand, when $q \equiv 5 \pmod 8$, for case C the points $P_q = (0,0,\pm m\iota)$ give the values $\inv_q\alpha_i(P_q) = \legendre{1+\iota}{q}$ and $\inv_q\alpha_i(P_q) = \legendre{1-\iota}{q}$, respectively. Since the product of these two is $\legendre{2}{q}$, we deduce that $(1,1)$ and $(-1,-1)$ are both contained in $E_q$.  
\end{proof}

\section{The integral Brauer--Manin obstruction}\label{sec:proof}
In this section, we prove Theorem~\ref{thm:DP-BMO}. We start by showing that $\U_{m^2}$ is everywhere locally soluble.

\begin{lemma}\label{lem:local-sol}
For every odd integer $m>0$, the surface $U_{m^2}$ is everywhere locally soluble; that is, $\U_{m^2}(\A_\Z)\neq\emptyset$.
\end{lemma}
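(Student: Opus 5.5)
We must show that $\U_{m^2}(\R)$ and $\U_{m^2}(\Z_p)$ are nonempty for every prime $p$. The real place is trivial: any real $x,y$ with $x^4+y^4\geq m^2$ works, for instance $(x,y,w)=(m^{1/2},0,0)$. For every place we look for points of the shape $(x,y,w)$ with one of $x,y$ equal to $0$ or $1$, so that the problem reduces to a single square-root condition in $\Z_p$ and Hensel's lemma finishes it.

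\emph{Odd primes $p\mid m$.} We take $(x,y,w)=(0,1,w)$ with $w^2=1-m^2$. Since $1-m^2\equiv 1\pmod p$ is a nonzero square modulo $p$, Hensel's lemma gives $w\in\Z_p$ with $w\equiv\pm1\pmod p$. These are exactly the points already used in Case A of the proof of Lemma~\ref{lem:oddq}.

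\emph{Odd primes $p\nmid m$.} We need $x,y\in\Z_p$ with $x^4+y^4-m^2$ a nonzero square modulo $p$, or exactly zero. The simplest choice is $y=m^{1/2}$ when $m$ is a square mod $p$, but this is not always available. Instead we argue geometrically. The reduction of $X_{m^2}$ modulo $p$ is a smooth del Pezzo surface of degree $2$, because $p\nmid 2m$. Its intersection with $z=0$ is the curve $D$, which has only four points, and these four points do not lift to affine points. We will therefore use the bound $\#X(\F_p)\geq p^2-6p+1$ for smooth degree-$2$ del Pezzo surfaces, which follows from the Weil conjectures. Subtracting the points of $D$, which number at most $p+1+2\sqrt p$ by Hasse, leaves an affine smooth $\F_p$-point for $p$ large, say $p\geq 11$. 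Since the affine model is smooth over $\Z_p$, this point lifts by Hensel's lemma.

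For the small primes $p\in\{3,5,7\}$ with $p\nmid m$ we proceed directly. Here $m^2\equiv1\pmod p$ and fourth powers are $0$ or $1$ modulo $p$. The point $(x,y,w)=(1,0,0)$ solves the equation modulo $p$, and $\partial/\partial x=4x^3\neq0$, so Hensel's lemma lifts it in $x$. Indeed, this trick works for every $p\nmid 2m$: one can solve $x^4=m^2$ via $x^2=m$ when $m$ is a square mod $p$, or $x^2=-m$ when $-m$ is. When neither is a square, we have $p\equiv 3\pmod 4$. Then $y=1$, $w=0$ gives $x^4=m^2-1$, and the surjectivity of squaring onto fourth powers for such $p$ handles it provided $m^2-1$ is a square. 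If the direct case analysis becomes cumbersome, we fall back on the Weil bound for large $p$.

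\emph{The prime $2$.} We split by $m\bmod 8$. If $m\equiv\pm1\pmod 8$, the point $(1,1,w)$ with $w^2=2-m^2\equiv1\pmod{16}$ from Lemma~\ref{lem:two17} works. If $m\equiv\pm3\pmod 8$, then $m^2\equiv 9\pmod{16}$. We take $x=y=1$ and need $2-m^2$ to be a square in $\Z_2$, but $2-m^2\equiv -7\pmod{16}$, which is not $1\pmod 8$. We therefore search over odd $x,y$ modulo $32$ for a value of $x^4+y^4-m^2$ that is $\equiv1\pmod 8$. Since odd fourth powers can be $1$ or $17\pmod{32}$, we get $x^4+y^4\in\{2,18,34\}\pmod{32}$, while $m^2\equiv 9$ or $25\pmod{32}$. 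This yields differences in $\{-7,9,25\}$ or $\{-23,-7,9\}$ modulo $32$. In either case $9$ is available, and $9\equiv1\pmod 8$ is a $2$-adic square.

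The main obstacle is the case of odd $p\nmid m$, where no uniform explicit point is obvious. I expect to need either the Lang--Weil/Weil point count for the del Pezzo surface, or a small case analysis in which squaring in $x$ or $y$ is used to reduce to the solvability of a conic.
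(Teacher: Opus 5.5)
\textbf{There is a genuine gap: the small primes $p\in\{5,7\}$ with $p\nmid m$ are not handled.} Your treatment of the real place, of the primes $p\mid m$, and of the primes $p\nmid 2m$ with $p\geq 11$ follows the paper. Your bound $\#X_{m^2}(\F_p)\geq p^2-6p+1$ is correct, is in fact sharper than the paper's $p^2-8p+1$, and still gives $p\geq 11$.

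For $p\in\{5,7\}$ with $p\nmid m$, your claim that $m^2\equiv 1\pmod p$ is false; among these small primes it holds only for $p=3$. Also, the fourth powers modulo $7$ are $\{0,1,2,4\}$, not $\{0,1\}$. Already for $m=3$, the paper's motivating case, $1-m^2$ is $\equiv 2\pmod 5$ and $\equiv 6\pmod 7$, and neither is a square. So $(1,0,0)$ is not a point modulo $5$ or modulo $7$.

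Your fallback does not rescue $p=5$. When $m\equiv\pm 2\pmod 5$, neither $m$ nor $-m$ is a square modulo $5$. Moreover $x^4=m^2-1\equiv 3\pmod 5$ has no solution, because fourth powers modulo $5$ are $0$ and $1$. Your case split is also backwards: if neither of $\pm m$ is a square modulo $p$, then $p\equiv 1\pmod 4$, not $3\pmod 4$. Since your point count only covers $p\geq 11$, local solubility at $p=5$ when $m\equiv\pm2\pmod 5$ is left unproved.

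\textbf{Repair.} The paper closes this case with explicit points:
\begin{itemize}
\item at $p=5$ with $m^2\equiv 4$, the point $(0,0,1)$;
\item at $p=7$ with $m^2\equiv 2$ or $m^2\equiv 4$, the point $(1,1,0)$ or $(3,0,0)$ respectively.
\end{itemize}
Correcting your case split in fact gives a uniform elementary argument for every odd $p\nmid m$, which makes the Lefschetz--Weil count unnecessary.
\begin{itemize}
\item If $p\equiv 3\pmod 4$, exactly one of $\pm m$ is a square modulo $p$. Take $(x_0,0,0)$ with $x_0^2=\pm m$, so that $x_0^4=m^2$.
\item If $p\equiv 1\pmod 4$, then $-m^2$ is a nonzero square modulo $p$. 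Take $(0,0,w)$ with $w^2=-m^2$.
\end{itemize}
In both cases Hensel's lemma produces the required $\Z_p$-point.

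\textbf{A harmless slip at $2$.} For every odd $m$ we have $2-m^2\equiv -7\equiv 1\pmod 8$. So $(1,1,w)$ works uniformly, exactly as in the paper. Your search modulo $32$ is unnecessary, although it does end on a valid point.
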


\begin{proof}
At the real place one may take $y=0$ and $x$ sufficiently large. At $2$, take $x=y=1$: since $2-m^2\equiv1\pmod 8$, it is a square in $\Z_2$. If an odd prime $q$ divides $m$, take $x=0$, $y=1$; the unit $1-m^2\equiv1\pmod {q^2}$ is a square in $\Z_q$.

It remains to consider an odd prime $p\nmid m$. The equation defining $X_{m^2}$ gives a smooth degree-$2$ del Pezzo surface over $\F_p$, and $D$ has smooth reduction. The Lefschetz trace formula and the Weil bound give
\[
 \#X_{m^2}(\F_p)\geq p^2-8p+1,
 \qquad
 \#D(\F_p)\leq p+1+2\sqrt p.
\]
Consequently $U_{m^2}(\F_p)\neq\emptyset$ for $p\geq11$. The remaining primes are immediate: for $p=3$ use $(x,y,w)=(1,0,0)$; for $p=5$, use $(1,0,0)$ if $m^2\equiv1 \pmod 5$ and $(0,0,1)$ if $m^2\equiv4 \pmod 5$; and for $p=7$, use $(1,0,0)$, $(1,1,0)$, or $(3,0,0)$ according as $m^2\equiv1,2$, or $4$ modulo $7$. All these points are smooth, so Hensel's lemma gives a point over $\Z_p$.
\end{proof}
Recall that by Theorem~\ref{thm:DP-affine-BG} we have $\Br U_{m^2}/\Br\Q=\langle\alpha_1,\alpha_2\rangle$, so
\begin{equation}\label{eq:bm-inclusion}
  \U_{m^2}(\A_\Z)^{\Br}=\U_{m^2}(\A_\Z)^{\langle\alpha_1,\alpha_2\rangle}\subseteq\U_{m^2}(\A_\Z)^{\beta},
\end{equation}
where the inclusion follows from $\beta=\alpha_1+\alpha_2\in\langle\alpha_1,\alpha_2\rangle$.
We use this fact by showing that if $\omega_5(m)=0$ and $\omega_3(m)$ is odd then there is no integral adelic point orthogonal to $\beta$. On the other hand, when this assumption does not hold, we show that $\U_{m^2}(\A_\Z)^{\Br}$ is not empty.

\begin{proof}[{Proof of Theorem~\ref{thm:DP-BMO}}]
    Assume that $\omega_5(m)=0$ and $\omega_3(m)$ is odd. We show that $\U_{m^2}(\A_\Z)^{\beta}=\emptyset$.
    By Lemma~\ref{lem:local-sol}, $\U_{m^2}(\A_\Z)\neq\emptyset$ and choose any $(P_v)_v\in\U_{m^2}(\A_\Z)$. By Lemma~\ref{lem:easy} only $2$ and the primes $q\mid m$ contribute to $\prod_v\inv_v\beta(P_v)$.
    Since $\omega_5(m)=0$, one has $m\equiv3^{\omega_3(m)}\cdot(\pm1)\pmod 8$, which is $\equiv3$ or $5\pmod 8$ as $\omega_3(m)$ is odd; so Lemma~\ref{lem:two35} gives $\inv_2\beta(P_2)=1$. By Lemma~\ref{lem:oddq}, each $q\mid m$ contributes $-1$ if and only if $q\equiv3\pmod 8$. Hence
    \[
      \prod_v\inv_v\beta(P_v)=(-1)^{\omega_3(m)}=-1,
    \]
    so no integral adelic point is orthogonal to $\beta$.

    For the other implication, we show that if $m$ is such that $\omega_5(m)>0$ or $\omega_3(m)$ is even, then $\U_{m^2}(\A_\Z)^{\langle\alpha_1,\alpha_2\rangle}$ is not empty.
    Suppose first $\omega_5(m)>0$, and fix a prime $q_0\mid m$ with $q_0\equiv5\pmod 8$. 
    By Lemma~\ref{lem:oddq}, $E_{q_0} = \{\pm1\}^2$, so we can choose a suitable $P_{q_0}$ such that $\prod_v\inv_v \boldsymbol{\alpha}(P_v) = (1,1)$, for any other choice of $P_v$ with $v \neq q_0$.

    Now suppose $\omega_5(m)=0$ and $\omega_3(m)$ even, so $m\equiv1$ or $7\pmod 8$.
    By Lemma~\ref{lem:two17}, there exists $P_2$ with $\inv_2\boldsymbol{\alpha}(P_2)=(1,1)$. 
    For each $q\mid m$ with $q\equiv1$ or $7\pmod 8$ choose $P_q$ with $\inv_q\boldsymbol{\alpha}(P_q)=(1,1)$, which exists by Lemma~\ref{lem:oddq}.
    Again by Lemma~\ref{lem:oddq}, we can choose $P_q$ such that $\inv_q\boldsymbol{\alpha}(P_q)=(1,-1)$ (or $(-1,1)$) for every $q \mid m$ with $q \equiv 3 \pmod 8$. Since there are an even number of such $q$'s, and since all other places contribute $(1,1)$ by Lemma~\ref{lem:easy}, the resulting adelic point is orthogonal to both $\alpha_1$ and $\alpha_2$.
\end{proof}

\begin{remark}
    For $m$ an odd prime, $\omega_3(m),\omega_5(m)\in\{0,1\}$. Thus from Theorem~\ref{thm:DP-BMO} there is a Brauer--Manin obstruction if and only if $m \equiv 3 \pmod 8$.
\end{remark}

\begin{remark}\label{rem:177}
For the values of $m$ for which there is no integral Brauer--Manin obstruction, we cannot exclude that some other type of obstruction holds.
For example, the value $m=177=3\cdot 59$ has $\omega_3(177)=2$ and $\omega_5(177)=0$, so by Theorems~\ref{thm:DP-affine-BG} and~\ref{thm:DP-BMO} there is no integral Brauer--Manin obstruction on $U_{177^2}$.
However, an exhaustive search done with both \texttt{Magma} and SageMath finds no integral point with $\max\{|x|,|y|\}\leq 50,000,000$.
On the other hand, for the values $m=57,129,201,209$, which have the same congruence pattern, we found integral points of height at most $10^4$.
We also checked that, for each of these $m$'s, at least one of the points satisfies the sign condition of \cite[Thm.~1.1]{alessandriloughran}, implying that $U_{m^2}(\Z)$ is not thin.

This suggests that either $U_{177^2}(\Z)=\emptyset$ and so the failure of the integral Hasse principle might be explained by another obstruction, unrelated to the congruences of $m$ modulo $8$, or there is some integral point, not detected by our search. In the latter case, it would suffice to find one integral point satisfying the same sign condition to conclude that $U_{177^2}(\Z)$ is not thin, as for the other values of $m$.
\end{remark}

\section{The density of parameters with an obstruction}\label{sec:counting}

Theorem~\ref{thm:DP-BMO} describes the odd squarefree $m$ with an integral Brauer--Manin obstruction by a condition on the residues modulo $8$ of the prime factors of $m$.
Thanks to this characterization, we are able to count the number of affine surfaces $\U_{m^2}$ such that there is an integral Brauer--Manin obstruction to the Hasse principle.
\begin{corollary}\label{cor:counting}
    For $T\geq 2$ let 
    \[N_{\Br}(T):=\#\{1<m\leq T:\ m\ \text{odd, squarefree},\ \U_{m^2}(\A_\Z)^{\Br}=\emptyset\}.\]
     Then
    \begin{equation*}
    N_{\Br}(T)=c\,
    \frac{T}{(\log T)^{1/4}}\left(1+O\bigl((\log T)^{-1/2}\bigr)\right),
    \end{equation*}
    where
    \[
    c=\frac{1}{2\,\Gamma(3/4)}\lim_{s\to1^+}(s-1)^{3/4}\prod_{\substack{p\ \rm{prime}\\ p\equiv1,3,7\bmod 8}}\left(1+\frac{1}{p^{s}}\right),
    \]
    the limit exists and $c>0$. 
\end{corollary}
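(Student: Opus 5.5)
By Theorem~\ref{thm:DP-BMO} (and Lemma~\ref{lem:local-sol}, which guarantees $\U_{m^2}(\A_\Z)\neq\emptyset$), $N_{\Br}(T)$ counts odd squarefree $m\leq T$ all of whose prime factors lie in $\{p\equiv 1,3,7 \bmod 8\}$, with an odd number of factors $\equiv 3\bmod 8$. (The condition $m>1$ is automatic, since $\omega_3(m)$ is odd.) I would detect the parity by the completely multiplicative function $\lambda(m)=(-1)^{\omega_3(m)}$ on such $m$. Write $\mathbf 1_{\mathcal S}$ for the indicator of squarefree $m$ supported on primes $\equiv1,3,7\bmod 8$. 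Then
\[
N_{\Br}(T)=\tfrac12\sum_{m\leq T}\mathbf 1_{\mathcal S}(m)-\tfrac12\sum_{m\leq T}\mathbf 1_{\mathcal S}(m)\lambda(m).
\]
Following the strategy of \cite{LoughranMitankin2021}, I would treat both sums with the Selberg--Delange method.

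For the first sum, the Dirichlet series is $F(s)=\prod_{p\equiv1,3,7 \bmod 8}(1+p^{-s})$. Using the four Dirichlet characters modulo $8$, the indicator of $p\equiv 1,3,7$ equals $\tfrac34+\tfrac14(\chi_{-4}(p)+\chi_{-8}(p)-\chi_{8}(p))$, up to the prime $2$. Hence
\[
F(s)=\zeta(s)^{3/4}\,\bigl(L(s,\chi_{-4})L(s,\chi_{-8})L(s,\chi_8)^{-1}\bigr)^{1/4}G(s),
\]
with $G$ holomorphic and nonvanishing in $\Re s>1/2$. The zero-free region for these $L$-functions then gives the hypotheses of the Selberg--Delange theorem with exponent $z=3/4$. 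This yields
\[
\sum_{m\le T}\mathbf 1_{\mathcal S}(m)=\frac{C}{\Gamma(3/4)}\frac{T}{(\log T)^{1/4}}\Bigl(1+O\bigl((\log T)^{-1}\bigr)\Bigr),\qquad C=\lim_{s\to1^+}(s-1)^{3/4}F(s).
\]
The limit exists and is positive because the $L$-values at $1$ are nonzero and $G(1)>0$. Multiplying by $\tfrac12$ gives exactly the constant $c$ in the statement.

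For the twisted sum, $\lambda$ flips the sign of the primes $\equiv3\bmod 8$. The density of the relevant primes becomes $\tfrac14(1-1+1)=\tfrac14$, so the series is $\zeta(s)^{1/4}$ times the appropriate quarter-powers of $L$-functions and a holomorphic factor. Selberg--Delange then bounds this sum by $O\bigl(T(\log T)^{-3/4}\bigr)$. Relative to the main term this is a saving of $(\log T)^{-1/2}$, which is precisely the stated error; the $(\log T)^{-1}$ error from the first sum is absorbed into it.

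The main obstacle is doing the character bookkeeping correctly. This means computing the correct exponents in both cases (here $3/4$ and $1/4$) and checking that the resulting complex powers of $L$-functions are holomorphic in a standard zero-free region. That region must extend to the left of $\Re s=1$, so that the Selberg--Delange hypotheses are met uniformly. I would verify the two exponent computations first, since they alone determine the main term and the size of the error.
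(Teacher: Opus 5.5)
Your argument is correct and has the same skeleton as the paper's proof. Both split $N_{\Br}(T)=\frac12\sum_{m\le T}(f-g)$ into the untwisted series $F(s)=\zeta(s)^{3/4}H_f(s)$ and the $(-1)^{\omega_3}$-twisted series $\zeta(s)^{1/4}H_g(s)$.

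The difference is in how the asymptotic is extracted. The paper applies a Delange-type Tauberian theorem \cite[Thm.~II.7.28]{Tenenbaum2015} to $D(s)=(F(s)-G(s))/2$. That theorem uses only non-negativity of the coefficients and the behaviour on $\operatorname{Re}s\geq 1$, and as written the paper concludes only $N_{\Br}(T)\sim c\,T(\log T)^{-1/4}$. You instead apply Selberg--Delange to the two sums separately. This costs you continuation slightly to the left of $\operatorname{Re}s=1$, i.e.\ a zero-free region for the $L$-functions modulo $8$, but does not need positivity, so it handles the signed sum directly. It gives $1+O((\log T)^{-1})$ for the main sum and $O(T(\log T)^{-3/4})$ for the twisted one. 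That is exactly what the factor $1+O((\log T)^{-1/2})$ in the statement requires, so your route justifies the stated error term and the paper's argument does not.

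There is one sign error in the character bookkeeping you flagged. Take the standard characters $\chi_{-4}(p)=(-1)^{(p-1)/2}$, $\chi_8(p)=\legendre{2}{p}$ and $\chi_{-8}(p)=\legendre{-2}{p}$. Then your expression $\frac34+\frac14(\chi_{-4}+\chi_{-8}-\chi_8)$ is the indicator of $p\equiv1,3,5\bmod 8$: it equals $1$ at $p\equiv5$ and $0$ at $p\equiv7$. The correct identity is
\[
\mathbf 1_{p\equiv1,3,7 \bmod 8}=\tfrac34+\tfrac14\bigl(\chi_8(p)+\chi_{-8}(p)-\chi_{-4}(p)\bigr),
\]
so
\[
F(s)=\zeta(s)^{3/4}\bigl(L(s,\chi_8)L(s,\chi_{-8})L(s,\chi_{-4})^{-1}\bigr)^{1/4}G(s).
\]
With your version the $p^{-s}$ terms at $p\equiv5,7$ would not cancel, and $G$ would not be an absolutely convergent Euler product on $\operatorname{Re}s>1/2$.

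For the twisted sum the correct factorisation is $\zeta(s)^{1/4}L(s,\chi_{-4})^{1/4}L(s,\chi_8)^{3/4}L(s,\chi_{-8})^{-1/4}$ times such a product. None of this changes the exponents $3/4$ and $1/4$, the constant $C=\lim_{s\to1^+}(s-1)^{3/4}F(s)$, or your positivity argument.
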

The argument below is inspired by \cite[Lemma~5.19]{LoughranMitankin2021}, where Loughran and Mitankin count the Markoff surfaces with an integral Brauer--Manin obstruction; the proof is a standard application of a Tauberian theorem.

\begin{proof}
By Theorem~\ref{thm:DP-BMO}, we count odd squarefree
integers $m\leq T$ with $\omega_5(m)=0$ and
$\omega_3(m)$ odd. Let $f$ be the indicator function
of the odd squarefree integers with no prime factor
congruent to $5$ modulo $8$, and put
$g(n)=(-1)^{\omega_3(n)}f(n)$. Then
\[
  N_{\Br}(T)=\frac12\sum_{n\leq T}\bigl(f(n)-g(n)\bigr).
\]
The associated Dirichlet series have Euler products
\[
\begin{aligned}
  F(s)&=\prod_{p\equiv1,3,7\ \modn 8}\left(1+\frac{1}{p^s}\right),\\
  G(s)&=\prod_{p\equiv1,7\ \modn 8}\left(1+\frac{1}{p^s}\right)
         \prod_{p\equiv3\ \modn 8}\left(1-\frac{1}{p^s}\right).
\end{aligned}
\]
By character orthogonality modulo $8$, we may write
\begin{equation}\label{eq-Dseries}    F(s)=\zeta(s)^{3/4}H_f(s),
  \qquad
  G(s)=\zeta(s)^{1/4}H_g(s),
\end{equation}
where $H_f$ and $H_g$ are products of powers of
non-principal Dirichlet \(L\)-functions modulo \(8\)
and Euler products converging absolutely on
\(\operatorname{Re}s>1/2\).
The Dirichlet series counting the required integers is 
\[
  D(s) = \frac{F(s)-G(s)}{2}.
\]
Its coefficients are non-negative. 
Moreover, it admits analytic continuation across the line $\operatorname{Re}(s)=1$, away from $s=1$. According to \eqref{eq-Dseries}, its leading singular term comes from $F(s)/2$. Therefore,
\[ 
 D(s) \sim \frac{H_f(1)}{2(s-1)^{3/4}}.
\]
A Tauberian theorem \cite[Thm.~II.7.28]{Tenenbaum2015} gives 
\[
  N_{\Br}(T)\sim\frac{H_f(1)}{2\Gamma(3/4)}
      \frac{T}{(\log T)^{1/4}}
\]
Furthermore, 
\[  
H_f(1) =  \lim_{s\to1^+}(s-1)^{3/4}F(s) > 0
\]
which gives the constant $c$ in the statement.

\end{proof}

\bibliographystyle{alpha}
\bibliography{ref}
\end{document}